\newcommand\shorttitle{On asymptotics of partitions into polynomials}
\newcommand\authors{Nian Hong Zhou}
\ifodd\value{page}
\authors
\shorttitle
\newtheorem{theorem}{Theorem}[section]
\newtheorem{lemma}{Lemma}[section]
\newtheorem{corollary}[theorem]{Corollary}
\newtheorem{proposition}[lemma]{Proposition}
\theoremstyle{remark}
\def\ri{\mathrm i}
\def\qb{\mathbb Q}
\def\rb{\mathbb R}
\def\nb{\mathbb N}
\def\zb{\mathbb Z}
\def\cb{{\mathbb C}}
\def\rrw{\rightarrow}
\numberwithin{equation}{section}
\title{\large \bf Note on partitions into polynomials with number of parts in an arithmetic progression}
\author{\small Nian Hong Zhou}
\date{} 
\begin{document}
\maketitle
\begin{abstract}
Let $f: \zb_+\rrw \zb_+$ be a polynomial with the property that corresponding to
every prime $p$ there exists an integer $\ell$ such that $p\nmid f(\ell)$. In this paper, we establish some equidistributed results between the number of partitions of an integer $n$ whose parts are taken from the sequence $\{f(\ell)\}_{\ell=1}^{\infty}$ and the number of parts of those partitions which are in a certain arithmetic progression.

\end{abstract}

\maketitle

\section{Introduction and statement of results}

\subsection{Background}\label{sec11}
We begin with some standard definitions from the theory of partitions \cite{MR0557013}.
A \emph{partition} is a finite non-increasing sequence $\pi_1,\pi_2\dots,\pi_m$ for some integer $m\ge 1$ such that each $\pi_j$ is a positive integer. The $\pi_j$ are called the \emph{parts} and $m$ is called the number of parts of the partition. We say $\pi$ is a partition of $n$ if $\pi_1+\pi_2+\dots+\pi_m=n$.\newline

In this paper, if not specially specified, $f(x)\in\qb[x]$ is an integer-valued polynomial which has the property that corresponding to every prime $p$ there exists an integer $\ell$ such that $p\nmid f(\ell)$ and $f(\ell)>0$ for all integers $\ell\ge 1$. Clearly, polynomials
$$x^{r},\; ax+b,\; x(x+1)/2,\; cx(x+1)(x+2)+1,\ldots,$$
with $r,a,b,c\in\zb_+$ and $\gcd(a,b)=1$ satisfy the conditions on $f$ above.  Let $p_f(n)$ denote the number of partitions of $n$ whose parts are taken from the sequence $\{f(\ell)\}_{\ell=1}^{\infty}$.
Then by Andrews \cite[Theorem 1.1]{MR0557013},
\begin{equation}\label{gf}
G_f(z):=\sum_{n\ge 0}p_f(n)q^{n}=\prod_{n\ge 1 }\frac{1}{1-q^{f(n)}}.
\end{equation}
Here and throughout this section, $q=e^{-z}, z\in\cb$ with $\Re(z)>0$.  Let $p_{f}(m,n)$ denote the number of partitions of $n$ whose parts lie in the sequence $\{f(\ell)\}_{\ell=1}^{\infty}$ and with exactly $m$ parts. Also, by Andrews \cite[p. 16]{MR0557013} we have
\begin{equation*}
G_f(\zeta, q):=\sum_{m,n\ge 0}p_f(m,n)\zeta^{m}q^{n}=\prod_{n\ge 1}\frac{1}{1-\zeta q^{f(n)}},
\end{equation*}
where $\zeta\in\cb$ and $|\zeta|<1/|q|$. Furthermore, letting $k\in\zb_+$ and $a\in\zb$ and denoting by $p_{f}(a,k;n)$ the number of partitions of $n$ whose parts are taken from the sequence $\{f(\ell)\}_{\ell=1}^{\infty}$, with the number of parts of those partitions congruent to $a$ modulo $k$, we have
\begin{equation}\label{eq1}
p_{f}(a,k;n)=\sum_{\substack{m\ge 0\\ m\equiv a\pmod {k}}}p_f(m,n).
\end{equation}

Determining the values of $p_f(n)$ has a long history and can be traced back to the work of Euler. The most famous example is when $f(n)=n$, which corresponds to unrestricted integer partitions. In this case $p_f(n)$ is usually denoted by $p(n)$. Hardy and Ramanujan \cite{MR1575586} proved
$$p(n)\sim \frac{1}{4\sqrt{3}n}e^{2\pi\sqrt{n/6}},$$
as $n\rrw \infty$. Let $f_r(n)=n^r$ with $r\in\zb_+$; we then obtain the $r$-th power partition function $p_r(n):=p_{f_r}(n)$. Hardy and Ramanujan \cite[p.~111]{MR1575586} conjectured that
\begin{equation}\label{eq00}
p_{r}(n)\sim \frac{c_rn^{\frac{1}{r+1}-\frac{3}{2}}}{\sqrt{(2\pi)^{1+r}(1+1/r)}}e^{(r+1)c_rn^{\frac{1}{r+1}}},
\end{equation}
as $n\rrw \infty$, where $c_r\in\rb_+$ is a constant. The conjecture \eqref{eq00} has been proven by Wright \cite[Theorem~2]{MR1555393}. An asymptotic expansion for $p_f(n)$ has been established in the last paragraph of Roth and Szekeres \cite[p. 258]{MR67913}~(see also \cite[Equ. (1*), p. 258]{MR67913}).

After Hardy and Ramanujan and after Wright, such type problems have been widely investigated in many works in the literature. We shall refer the reader to Rademacher \cite{MR1575213}, Ingham \cite{MR5522}, Roth and Szekeres \cite{MR67913},  Meinardus \cite{MR62781} and Richmond \cite{MR0382210} for examples.


\subsection{Main results}
In this paper, we investigate equidistributed properties of $p_{f}(a,k;n)$. To give our main results we need the following quantity $\Pi_f$ defined as
\begin{equation}
\Pi_f=\prod_{\substack{p~prime,~ s\in\zb_+ \\ p^s \parallel (f(\ell)-f(0)),\;\forall \ell\in\zb}}p^s,
\end{equation}
where $p^s\parallel m$ means that $p^s$ divides $m$ and $p^{s+1}$ not divide $m$.

The first result of this paper is stated as in the following.
\begin{proposition}\label{main1}Let $a, k\in \zb_+$ and $\delta \mid \Pi_f$. Then we have $p_f(a,\delta k; n)=0$ for $\delta\nmid (n-af(0))$ and
$$
\sum_{\substack{n\ge 0\\ n\equiv af(0)\pmod \delta}}\left(p_f(a,\delta k;n)-\frac{p_f( n)}{k}\right)q^n=\frac{1}{k\delta}\sum_{\substack{1\le j<k\\ 0\le \ell<\delta}}\zeta_{\delta k}^{-ja-k\ell af(0)}G_f\left(\zeta_{\delta k}^{j}, \zeta_{\delta }^{\ell} q\right),
$$
where $\zeta_{a}^b:=e^{{2\pi\ri b}/{a}}$. In particular, $p_f(a,\delta ;n)=p_f(n)$ for  $\delta\mid (n-af(0))$.
\end{proposition}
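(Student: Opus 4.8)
The plan is to combine a congruence obstruction read off from the definition of $\Pi_f$ with a roots-of-unity extraction applied to the two-variable generating function $G_f(\zeta,q)$, the decisive input being a functional equation that trades the substitution $q\mapsto\zeta_\delta^\ell q$ for the substitution $\zeta\mapsto\zeta\,\zeta_\delta^{\ell f(0)}$. Concretely, I would first note that if $\delta\mid\Pi_f$ then, comparing the exact power of each prime $p\mid\delta$ in $\delta$ with the exponent appearing in $\Pi_f$, one gets $\delta\mid f(\ell)-f(0)$, i.e. $f(\ell)\equiv f(0)\pmod\delta$, for every $\ell\ge1$; and since $p\mid f(\ell)-f(0)$ for all $\ell$ while $p\nmid f(\ell_0)$ for some $\ell_0$ by hypothesis, also $p\nmid f(0)$, so $\gcd(f(0),\delta)=1$. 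The vanishing claim is then immediate: a partition of $n$ into $m$ parts from $\{f(\ell)\}_{\ell\ge1}$ satisfies $n\equiv m f(0)\pmod{\delta}$, so $p_f(m,n)=0$ unless $\delta\mid(n-mf(0))$; summing over $m\equiv a\pmod{\delta k}$ gives $p_f(a,\delta k;n)=0$ unless $\delta\mid(n-af(0))$.

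For the identity, the functional equation comes directly from $G_f(\zeta,q)=\prod_{n\ge1}(1-\zeta q^{f(n)})^{-1}$: since $f(n)\equiv f(0)\pmod\delta$ we have $(\zeta_\delta^\ell q)^{f(n)}=\zeta_\delta^{\ell f(0)}q^{f(n)}$ for every $n$, whence
\[
G_f(\zeta,\zeta_\delta^{\ell}q)=G_f(\zeta\,\zeta_\delta^{\ell f(0)},q),\qquad 0\le\ell<\delta,
\]
valid as an identity of analytic functions on $|q|<1$, $|\zeta|\le1$ (equivalently, as a formal identity). A standard roots-of-unity extraction from $G_f(\zeta,q)=\sum_{m,n}p_f(m,n)\zeta^{m}q^{n}$ yields $\sum_{n}p_f(a,\delta k;n)q^{n}=\frac{1}{\delta k}\sum_{j=0}^{\delta k-1}\zeta_{\delta k}^{-aj}G_f(\zeta_{\delta k}^{j},q)$, the left side being $\sum_{n\equiv af(0)\,(\delta)}p_f(a,\delta k;n)q^{n}$ by the vanishing claim. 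In the same way $\frac1k\sum_{n\equiv af(0)\,(\delta)}p_f(n)q^{n}=\frac{1}{k\delta}\sum_{\ell=0}^{\delta-1}\zeta_\delta^{-\ell af(0)}G_f(\zeta_\delta^{\ell}q)$, and using the functional equation with $\zeta=1$ together with the substitution $m\equiv\ell f(0)\pmod\delta$ (a bijection of $\zb/\delta\zb$ since $\gcd(f(0),\delta)=1$) rewrites this as $\frac{1}{k\delta}\sum_{m=0}^{\delta-1}\zeta_\delta^{-am}G_f(\zeta_\delta^{m},q)$, which is precisely the part of the first sum with $k\mid j$ (write $j=km$ and use $\zeta_{\delta k}^{km}=\zeta_\delta^{m}$). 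Subtracting cancels those terms, leaving $\frac{1}{\delta k}\sum_{0\le j<\delta k,\ k\nmid j}\zeta_{\delta k}^{-aj}G_f(\zeta_{\delta k}^{j},q)$ on the right.

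It remains to reindex: write $j\equiv j_1+k\ell f(0)\pmod{\delta k}$ with $1\le j_1<k$ and $0\le\ell<\delta$. Reducing modulo $k$ identifies $j_1$ as the residue of $j$, so $k\nmid j\iff j_1\neq0$, and $\gcd(f(0),\delta)=1$ pins down $\ell$ modulo $\delta$; as both index sets have $(k-1)\delta$ elements, $(j_1,\ell)\mapsto j$ is a bijection onto $\{\,j:0\le j<\delta k,\ k\nmid j\,\}$. Under it $\zeta_{\delta k}^{-aj}=\zeta_{\delta k}^{-aj_1-k\ell af(0)}$ and $\zeta_{\delta k}^{j}=\zeta_{\delta k}^{j_1}\zeta_\delta^{\ell f(0)}$, so by the functional equation $G_f(\zeta_{\delta k}^{j},q)=G_f(\zeta_{\delta k}^{j_1}\zeta_\delta^{\ell f(0)},q)=G_f(\zeta_{\delta k}^{j_1},\zeta_\delta^{\ell}q)$, which gives exactly the asserted formula; taking $k=1$ empties the sum and yields $p_f(a,\delta;n)=p_f(n)$ for $\delta\mid(n-af(0))$. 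I expect the main obstacle to be the bookkeeping in this last step — verifying that $(j_1,\ell)\mapsto j_1+k\ell f(0)$ really is a bijection onto the residues not divisible by $k$ — along with the careful reading of the definition of $\Pi_f$ needed to extract both $f(\ell)\equiv f(0)\pmod\delta$ and $\gcd(f(0),\delta)=1$, which is the only place the arithmetic hypotheses on $f$ enter.
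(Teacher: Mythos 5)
Your proposal is correct and follows essentially the same route as the paper: a roots-of-unity filter on $G_f(\zeta,q)$ combined with the congruence $f(n)\equiv f(0)\pmod{\delta}$ and $\gcd(f(0),\delta)=1$; the only cosmetic difference is that you reindex via multiplication by $f(0)$ (through the functional equation $G_f(\zeta,\zeta_\delta^{\ell}q)=G_f(\zeta\,\zeta_\delta^{\ell f(0)},q)$), whereas the paper uses the inverse $\hat f_\delta$ of $f(0)$ modulo $\delta$ to the same effect. All the bookkeeping you flag (the bijection $(j_1,\ell)\mapsto j_1+k\ell f(0)$ and the extraction of $\gcd(f(0),\delta)=1$ from the definition of $\Pi_f$) checks out.
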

From Proposition \ref{main2}, we further prove
\begin{theorem}\label{main2}Let $n\in\zb_+$. For $\delta \mid \Pi_f$, $a, k\in \zb_+$ such that for any prime $p|k$ one has $(p\delta)\nmid \Pi_f$ and $k=o\left(n^{1/(2+2\deg(f))}/\sqrt{\log n}\right)$, there exists a constant $\delta_{f}\in\rb_+$ depending only on $f$ such that
$$p_f(a,\delta  k;n)=\frac{p_f(a,\delta; n)}{k}\left(1+O \left(n\exp\left(-\delta_{f}k^{-2}n^{\frac{1}{1+\deg(f)}}\right)\right)\right),$$
as $n\rrw \infty$ with $n\equiv af(0)~(\bmod \delta)$.
\end{theorem}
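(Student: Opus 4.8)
The plan is to combine the exact identity of Proposition~\ref{main1} with a quantitative saddle-point bound for the coefficients $[q^n]G_f(\zeta,q)$ at roots of unity $\zeta\ne 1$. By Proposition~\ref{main1} and the equality $p_f(a,\delta;n)=p_f(n)$ for $\delta\mid n-af(0)$, comparing the coefficients of $q^n$ gives, for $n\equiv af(0)\pmod{\delta}$,
\[
p_f(a,\delta k;n)-\frac{p_f(a,\delta;n)}{k}=\frac{1}{k\delta}\sum_{1\le j<k}\ \sum_{0\le\ell<\delta}\zeta_{\delta k}^{-ja-k\ell af(0)}\,[q^n]\,G_f\bigl(\zeta_{\delta k}^{j},\zeta_{\delta}^{\ell}q\bigr).
\]
Since $|\zeta_{\delta}^{\ell}|=1$ we have $\bigl|[q^n]G_f(\zeta,\zeta_{\delta}^{\ell}q)\bigr|=\bigl|[q^n]G_f(\zeta,q)\bigr|$, and $\delta\mid\Pi_f$ is bounded in terms of $f$ alone; so it suffices to prove, for each $1\le j<k$,
\[
\bigl|[q^n]\,G_f(\zeta_{\delta k}^{j},q)\bigr|\ \ll_f\ n^{O(1)}\,p_f(n)\,\exp\!\bigl(-c_f\,k^{-2}n^{1/(1+\deg f)}\bigr)
\]
for some fixed $c_f>0$, and then to sum the at most $(k-1)\delta$ resulting terms (absorbing the factor $k\delta\le n$). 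I expect the hypothesis that $(p\delta)\nmid\Pi_f$ for every prime $p\mid k$ to enter here exactly as the statement that each $\zeta_{\delta k}^{j}$ with $1\le j<k$ has order \emph{not} dividing $\Pi_f$; by the definition of $\Pi_f$ this means precisely that $n\mapsto(\zeta_{\delta k}^{j})^{f(n)}$ is non-constant, which is the condition under which the Euler product for $G_f(\zeta_{\delta k}^{j},\cdot)$ is genuinely smaller than that for $G_f$. In the complementary range Proposition~\ref{main1} shows that $p_f(n)/k$ is not even the correct main term, so a restriction of this kind is unavoidable.

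Now write $q=e^{-z}$, $d=\deg f$. I would take as input the classical Roth--Szekeres/Meinardus asymptotic $\log G_f(e^{-z})=\kappa_f z^{-1/d}+O_f(|z|^{-\eta})$ with $\kappa_f>0$, $0\le\eta<1/d$, valid as $z\to0$ in a fixed sector about $\rb_{>0}$, together with the ensuing saddle-point (Hayman-admissibility) estimate $p_f(n)\asymp_f n^{-\beta}G_f(e^{-z_0})e^{nz_0}$, where $z_0\asymp_f n^{-d/(d+1)}$ solves the saddle equation, so $z_0^{-1/d}\asymp n^{1/(d+1)}$; the admissibility of $G_f$ is exactly where the standing hypothesis ``for each prime $p$ there is $\ell$ with $p\nmid f(\ell)$'' is used. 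The new ingredient is the uniform bound, for $\zeta=\zeta_{\delta k}^{j}=e^{2\pi\ri\theta}$ with $1\le j<k$,
\[
\max_{|\varphi|\le\pi}\bigl|G_f(\zeta,r_0 e^{\ri\varphi})\bigr|\ \le\ G_f(r_0)\exp\!\bigl(-c_f\,k^{-2}z_0^{-1/d}\bigr),\qquad r_0:=e^{-z_0}.
\]
Its proof would rest on the elementary product identity, valid for $0<r<1$ and $\varphi\in\rb$,
\[
\log\frac{G_f(r)}{|G_f(\zeta,re^{\ri\varphi})|}=\frac12\sum_{n\ge1}\log\!\left(1+\frac{2r^{f(n)}\bigl(1-\cos(2\pi\theta+f(n)\varphi)\bigr)}{(1-r^{f(n)})^{2}}\right)\ \ge\ 0 .
\]
Taking $r=r_0$ and keeping only the $\asymp z_0^{-1/d}$ indices with $f(n)\asymp 1/z_0$ — for which $r_0^{f(n)}\asymp1$ and $(1-r_0^{f(n)})^{2}\asymp1$ — one needs $1-\cos(2\pi\theta+f(n)\varphi)\gg\|\theta\|^{2}$ for a positive proportion of them. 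Near $\varphi=0$ this is immediate, since $1-\cos2\pi\theta\gg\|\theta\|^{2}$ and $\|\theta\|=\|j/(\delta k)\|\ge(\delta k)^{-1}\gg_f k^{-1}$. Away from $\varphi=0$ the saving is far larger: if $\varphi$ is near $2\pi a/b$ with $b\mid\Pi_f$ then $f(n)\varphi$ is essentially constant modulo $2\pi$, and the non-degeneracy of $\zeta$ keeps that constant at distance $\gg_f k^{-1}$ from $2\pi\zb$; for all remaining $\varphi$ the hypothesis on $f$ forces $\{f(n)\varphi/2\pi\}$ to be non-degenerate, so $1-\cos(2\pi\theta+f(n)\varphi)\gg1$ on a positive proportion of the relevant $n$, yielding the much larger saving $\exp(-c_f'z_0^{-1/d})$. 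A Farey dissection would organize the rationals $a/b$ according to the size of $b$.

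With the displayed uniform bound in hand, Cauchy's formula on $|q|=r_0$ gives
\begin{align*}
\bigl|[q^n]\,G_f(\zeta_{\delta k}^{j},q)\bigr|
&\le\frac{e^{nz_0}}{2\pi}\int_{-\pi}^{\pi}\bigl|G_f(\zeta_{\delta k}^{j},r_0e^{\ri\varphi})\bigr|\,d\varphi
\ \ll\ G_f(r_0)e^{nz_0}\exp\!\bigl(-c_f k^{-2}z_0^{-1/d}\bigr)\\
&\asymp_f\ n^{\beta}\,p_f(n)\,\exp\!\bigl(-c_f k^{-2}n^{1/(d+1)}\bigr),
\end{align*}
which is the required estimate. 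Summing over $j$ and $\ell$ in the identity above, dividing by $p_f(a,\delta;n)/k=p_f(n)/k$, and observing that under $k=o\bigl(n^{1/(2+2\deg f)}/\sqrt{\log n}\bigr)$ the exponential dominates the accumulated powers of $n$, leaving a single factor $n$ as in the statement, yields the theorem with $\delta_f=c_f$. I expect the main obstacle to be the \emph{uniformity in $k$}: because $\zeta_{\delta k}\to1$ as $k\to\infty$, the ``defect'' $\|\theta\|^{2}\asymp k^{-2}$ in the exponential growth constant of $G_f(\zeta,\cdot)$ shrinks, so $k$ must be carried explicitly through the saddle-point estimate and — more delicately — through the minor-arc/Farey analysis near rationals of denominator comparable to $k$, and one must verify that the stated range of $k$ is precisely what keeps the gain $k^{-2}n^{1/(d+1)}$ ahead of every error.
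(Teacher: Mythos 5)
Your overall strategy is the paper's: reduce via the exact identity of Proposition \ref{main1} to a uniform bound of the form $|G_f(\zeta_{\delta k}^{j},\zeta_{\delta}^{\ell}e^{-x-2\pi\ri y})|\le G_f(x)\exp(-c_fk^{-2}x^{-1/\deg f})$ on the saddle circle, and then extract the coefficient. (The paper packages the extraction as a Parseval mean-square estimate, Proposition \ref{pro2}, followed by Cauchy--Schwarz, rather than Cauchy's integral formula; but since its $L^2$ bound is itself obtained from the pointwise sup bound over the circle, the two are interchangeable and lose the same polynomial factor in $n$.) Your log-ratio identity and the reduction to showing $\|\theta+f(n)\varphi/(2\pi)\|\gg 1/k$ for a positive proportion of $n\le x^{-1/\deg f}$ are exactly the paper's lower bound for $F_{f,k,\delta,j,\ell}(x,y)$ together with Lemma \ref{Z}, and you correctly locate where the hypothesis $(p\delta)\nmid\Pi_f$ enters.

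The genuine gap is in your major-arc case. When $\varphi/(2\pi)$ lies in the Dirichlet arc around $d/h$ with $h\mid\Pi_f$, you assert that $f(n)\varphi$ is ``essentially constant modulo $2\pi$,'' so that the whole saving comes from the fixed offset $\theta+f(0)d/h$. That is only true when $|\varphi/(2\pi)-d/h|\ll 1/f(L)$ with $L\asymp x^{-1/\deg f}$; the arc produced by Dirichlet's theorem with denominators up to $L^{r-1}$ has width $1/(hL^{r-1})$, so $f(n)(\varphi/(2\pi)-d/h)$ can sweep through as many as $\asymp L/h$ full periods as $n$ ranges up to $L$, and one must rule out that $\theta+f(0)d/h+f(n)\vartheta$ sits near $\zb$ for almost all such $n$. (Shrinking the arcs to width $1/f(L)$ does not help, because the complementary denominators then reach $L^{r}$ and Weyl's inequality gives no saving there.) This is precisely what the paper's Lemma \ref{lem43} supplies: a uniform-in-$\vartheta$ estimate $\int_0^{L}\sin^2(\pi(b/a-f(u)\vartheta))\,du\gg L/a^2$, proved by splitting according to whether $\vartheta f(L/12)$ is small (the offset dominates on an initial segment of length $\gg L$) or large ($f(u)\vartheta$ passes through many intervals around the points $\ell/a$ and spends only a bounded fraction of its time near each). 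Without this lemma, or an equivalent, the claimed uniform bound on the circle --- and hence the theorem --- does not follow from your sketch. The remaining ingredients (the minor arcs via Lemma \ref{lem41} and Weyl's inequality, and the final bookkeeping in $k$ and $n$) match the paper.
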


The above results immediately give the following corollary.
\begin{corollary}\label{cor1}For $a, k, n\in \zb_+$ such that $\gcd(k,\Pi_f)=1$,
$$p_f(a, k;n)\sim k^{-1}p_f(n),$$
as $n\rrw \infty$, holds for $k=o\left(n^{\frac{1}{2+2\deg(f)}}(\log n)^{-\frac{1}{2}}\right)$ .
\end{corollary}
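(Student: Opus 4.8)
The plan is to read off Corollary~\ref{cor1} as the special case $\delta=1$ of Theorem~\ref{main2}, after using Proposition~\ref{main1} to simplify the main term. The first step is to verify that $\delta=1$ meets every hypothesis of the two results. Indeed $1\mid\Pi_f$, the congruence $n\equiv af(0)\pmod 1$ holds for all $n$, and the requirement ``$(p\delta)\nmid\Pi_f$ for every prime $p\mid k$'' becomes ``$p\nmid\Pi_f$ for every prime $p\mid k$'', which is exactly $\gcd(k,\Pi_f)=1$. Since the growth restriction $k=o\big(n^{1/(2+2\deg(f))}(\log n)^{-1/2}\big)$ is literally the one in Theorem~\ref{main2}, that theorem applies with $\delta=1$ and gives
$$p_f(a,k;n)=\frac{p_f(a,1;n)}{k}\Big(1+O\big(n\exp(-\delta_f k^{-2}n^{1/(1+\deg(f))})\big)\Big)$$
as $n\to\infty$.

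The second step is to identify $p_f(a,1;n)$. Every partition has a number of parts that is congruent to $a$ modulo $1$, so $p_f(a,1;n)=p_f(n)$; equivalently this is the $\delta=1$ instance of the last sentence of Proposition~\ref{main1}. Substituting, we obtain
$$p_f(a,k;n)=\frac{p_f(n)}{k}\Big(1+O\big(n\exp(-\delta_f k^{-2}n^{1/(1+\deg(f))})\big)\Big).$$

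The third step is to show that the error term tends to $0$. The hypothesis $k=o\big(n^{1/(2+2\deg(f))}(\log n)^{-1/2}\big)$ is equivalent to $k^{-2}n^{1/(1+\deg(f))}/\log n\to\infty$, so for $n$ large we have $\delta_f k^{-2}n^{1/(1+\deg(f))}\ge 2\log n$ and hence $n\exp(-\delta_f k^{-2}n^{1/(1+\deg(f))})\le n^{-1}=o(1)$. Therefore $p_f(a,k;n)=k^{-1}p_f(n)\big(1+o(1)\big)$, which is the claim.

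There is no genuine obstacle here: the corollary is a faithful specialization of the two preceding statements, and the only two things needing attention are (i) confirming that $\delta=1$ satisfies all the hypotheses of Theorem~\ref{main2} and Proposition~\ref{main1}, and (ii) converting the $o$-bound on $k$ into the super-polynomial decay of $\exp(-\delta_f k^{-2}n^{1/(1+\deg(f))})$ so that the multiplicative error collapses to $1+o(1)$. One may also remark that the range of $k$ covered includes both fixed $k$, recovering the classical equidistribution $p_f(a,k;n)\sim k^{-1}p_f(n)$, and $k$ growing with $n$ up to the stated threshold.
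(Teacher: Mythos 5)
Your proposal is correct and follows exactly the route the paper intends (the paper simply says the corollary follows "immediately" from Proposition \ref{main1} and Theorem \ref{main2} with $\delta=1$); you have merely spelled out the details of checking the hypotheses and of converting the bound $k=o\big(n^{1/(2+2\deg(f))}(\log n)^{-1/2}\big)$ into the decay of the error factor, both of which are done correctly.
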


We note that the case of $f(x)=x^r~(r\ge 2)$, $k=2$ of above Corollary \ref{cor1} was conjectured by Bringmann and Mahlburg \cite{BM} in their unpublished notes, which was proven by Ciolan \cite{Ciolan, Ciolan1} recently, by using a more complicate method.

\paragraph{Notations.}
The symbols $\zb$, $\mathbb{Z}_+$, $\mathbb{R}$ and $\mathbb{R_+}$ denote the set of
the integers, the positive integers, the real numbers and the positive real numbers, respectively. $e(z):=e^{2\pi i z}$, $\zeta_{a}^b:=e^{2\pi\ri b/a}$ and $\|x\|:=\min_{y\in\zb}|y-x|$.  If not specially specified, all the implied constants of this paper in $O$ and $\ll$ depend only on $f$.
\paragraph{Acknowledgements.}The author would like to thank the referee for very helpful and detailed comments and suggestions. This research was partly supported by the National Science Foundation of China (Grant No. 11971173).
\section{The proof of the main results of this paper}
\subsection{The proof of Proposition \ref{main1}}\label{sec21}
By the definition of $\Pi_f$, if $\delta |\Pi_f$ then for any $\ell\in\zb$, $\delta |(f(\ell)-f(0))$. By the condition on $f$, for every prime $p|\delta$, there exists an $\ell_0\in\zb$ such that $p\nmid f(\ell_0)$, and hence $p\nmid f(0)$. This mans that $\gcd(f(0), \delta)=1$. Therefore, there exists an integer $\hat{f}_\delta\in \zb$ such that
\begin{equation}\label{eqfell}
\hat{f}_{\delta}f(\ell)\equiv\hat{f}_{\delta}f(0)\equiv 1~(\bmod~ \delta)
\end{equation}
holds for all $\ell\in\zb$. Therefore, for any $k\in\zb_+$, using the orthogonality of roots of unity
$$\frac{1}{\delta k}\sum_{0\le j<\delta k}\zeta_{\delta k}^{j(m-a)}
=\begin{cases}1\quad &m\equiv a \pmod {\delta k},\\
0 &m\not\equiv a \pmod {\delta k},
\end{cases}
$$
we have
\begin{align*}
\sum_{n\ge 0}p_f(a,\delta k;n)q^{n}
&=\sum_{n\ge 0}q^n\sum_{m\ge 0}p_f(m,n)\frac{1}{\delta k}\sum_{0\le j<\delta k}\zeta_{\delta k}^{j(m-a)}\\
&=\frac{1}{k\delta}\sum_{0\le j<k\delta}\zeta_{\delta k}^{-ja}\prod_{n\ge 1}\frac{1}{1-\zeta_{\delta k}^{j} q^{f(n)}}\\
&=\frac{1}{k\delta}\sum_{0\le j<k}\zeta_{\delta k}^{-ja}\sum_{0\le \ell <\delta}\zeta_{\delta }^{-\ell a}\prod_{n\ge 1}\frac{1}{1-\zeta_{\delta k}^{j}\zeta_{\delta }^{\ell}q^{f(n)}}.
\end{align*}
Inserting \eqref{eqfell} into the above we obtain
\begin{align*}
\sum_{n\ge 0}p_f(a,\delta k;n)q^{n}&=\frac{1}{k\delta}\sum_{0\le j<k}\zeta_{\delta k}^{-ja}\sum_{0\le \ell <\delta}\zeta_{\delta }^{-\ell a}\prod_{n\ge 1}\frac{1}{1-\zeta_{\delta k}^{j}\zeta_{\delta }^{\ell \hat{f}_{\delta}f(n)}q^{f(n)}}\\
&=\frac{1}{k\delta}\sum_{0\le j<k}\zeta_{\delta k}^{-ja}\sum_{0\le \ell <\delta}\zeta_{\delta }^{-\ell a}G_f\left(\zeta_{\delta k}^{j}, \zeta_{\delta }^{\hat{f}_{\delta}\ell} q\right).
\end{align*}
Since $\gcd(\delta,\hat{f}_{\delta}) = 1$, the map $\ell\mapsto \hat{f}_{\delta}\ell$ permutes the residues modulo $\delta$. Thus by noting that $\hat{f}_{\delta}f(0)\equiv 1\pmod \delta$ we have
\begin{align*}
\sum_{n\ge 0}p_f(a,\delta k;n)q^{n}
&=\frac{1}{k\delta}\sum_{0\le j<k}\zeta_{\delta k}^{-ja}\sum_{0\le \ell <\delta}\zeta_{\delta }^{-\ell af(0)}G_f\left(\zeta_{\delta k}^{j}, \zeta_{\delta }^{\ell} q\right).
\end{align*}
This completes the proof by using the orthogonality of roots of unity.

\subsection{The proof of Theorem \ref{main2}}

To prove Theorem \ref{main1}, we need the following leading asymptotics of $p_f(n)$, which is deduced from Roth and Szekeres \cite[Equation (1*), p. 258]{MR67913}.
\begin{proposition}\label{pro1}We have
\begin{equation*}
p_f(n)\sim \frac{G_f(x)e^{nx}}{\sqrt{2\pi A_2(n)}},
\end{equation*}
as $n\rrw +\infty$, with $x, A_2(n)\in\rb_+$ given by
\begin{equation*}
n=\sum_{\ell\ge 1}\frac{f(\ell)}{e^{f(\ell)x}-1}\; \text{and}\; A_2(n)=\sum_{\ell\ge 1}\frac{f(\ell)^2e^{f(\ell)x}}{(e^{f(\ell)x}-1)^2}.
\end{equation*}
\end{proposition}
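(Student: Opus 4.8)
The plan is to prove Proposition~\ref{pro1} by the classical saddle-point analysis of the Cauchy integral for $p_f(n)$; this is the argument of Roth and Szekeres specialized to the sequence $\{f(\ell)\}_{\ell\ge1}$. From \eqref{gf},
\[
p_f(n)=\frac{1}{2\pi}\int_{-\pi}^{\pi}G_f\!\left(e^{-x-\ri t}\right)e^{n(x+\ri t)}\,dt,
\]
and one takes $x=x(n)>0$ to be the saddle point, i.e.\ the point at which $x\mapsto\log G_f(e^{-x})+nx$ is minimized. From $\log G_f(e^{-x})=-\sum_{\ell\ge1}\log(1-e^{-f(\ell)x})$ one gets $\tfrac{d}{dx}\log G_f(e^{-x})=-\sum_{\ell\ge1}f(\ell)/(e^{f(\ell)x}-1)$ and $\tfrac{d^{2}}{dx^{2}}\log G_f(e^{-x})=A_2(n)$, so the defining relation $n=\sum_{\ell\ge1}f(\ell)/(e^{f(\ell)x}-1)$ is exactly the vanishing of the first derivative and $A_2(n)$ is the second derivative at the saddle. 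Since $f$ is a polynomial with positive leading coefficient, $f(\ell)\asymp\ell^{\deg f}$, and comparing these sums with the corresponding Mellin integrals one finds $x(n)\to0^{+}$, $A_2(n)\to\infty$, and $nx\asymp n^{1/(1+\deg f)}$, all with explicit polynomial rates in $n$.

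For the main term, fix a cut $t_0$ with $x\ll t_0\ll1$, $A_2(n)t_0^{2}\to\infty$, and $t_0$ small enough that the cubic term is negligible (e.g.\ $t_0=x\log(1/x)$). On $|t|\le t_0$, Taylor expansion gives
\[
\log G_f\!\left(e^{-x-\ri t}\right)+n(x+\ri t)=\log G_f(e^{-x})+nx-\tfrac12A_2(n)\,t^{2}+E(t),
\]
where the term linear in $t$ cancels by the choice of $x$ and $\sup_{|t|\le t_0}|E(t)|\to0$. Extending the Gaussian integral to $\rb$ and using $\int_{\rb}e^{-A_2(n)t^{2}/2}\,dt=\sqrt{2\pi/A_2(n)}$ then produces the asserted main term $G_f(x)e^{nx}/\sqrt{2\pi A_2(n)}$ with multiplicative error $1+o(1)$.

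The remaining point, which I expect to be the only real obstacle, is the minor-arc estimate: the contribution of $t_0\le|t|\le\pi$ must be $o(1)$ times the main term, for which it suffices that $\sup_{t_0\le|t|\le\pi}|G_f(e^{-x-\ri t})|=o\bigl(G_f(x)/\sqrt{A_2(n)}\bigr)$. Writing
\[
\log\frac{|G_f(e^{-x-\ri t})|}{G_f(e^{-x})}=-\sum_{\ell\ge1}\Bigl(\log\bigl|1-e^{-xf(\ell)}e^{-\ri tf(\ell)}\bigr|-\log\bigl(1-e^{-xf(\ell)}\bigr)\Bigr)
\]
and keeping only the $\ell$ with $f(\ell)\in[\varepsilon/x,\,C/x]$ --- a positive proportion of all $\ell\lesssim x^{-1/\deg f}$, for which $e^{-xf(\ell)}$ lies in a fixed compact subinterval of $(0,1)$ --- one uses $\log|1-re^{\ri\theta}|-\log(1-r)\gg\|\theta/(2\pi)\|^{2}$ (uniformly for such $r$) to reduce matters to showing that for every $t$ in the minor arc a positive proportion of these $\ell$ satisfy $\|f(\ell)t/(2\pi)\|\gg1$. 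This is exactly where the hypothesis on $f$ is used: since no prime divides every value $f(\ell)$, the $f(\ell)$ are not confined to a proper subgroup of $\zb$, so $\{f(\ell)t\}$ cannot cluster near $0$ unless $t$ is very close to a rational with small denominator, and near such rationals the polynomial density of the $f(\ell)$ again forces cancellation --- this is the content of the Weyl-type counting lemmas of Roth and Szekeres. Rather than reproduce this, the most economical route is to check directly that $a_\ell=f(\ell)$ meets the hypotheses of Roth and Szekeres (in particular that $D(u)=\#\{\ell:f(\ell)\le u\}\asymp u^{1/\deg f}$ and the non-divisibility condition hold) and then to invoke \cite[Eq.~(1*), p.~258]{MR67913}, whose leading term is precisely $G_f(x)e^{nx}/\sqrt{2\pi A_2(n)}$.
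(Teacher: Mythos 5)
Your proposal is correct and lands in the same place as the paper: the paper offers no independent proof of Proposition~\ref{pro1}, stating only that it is ``deduced from Roth and Szekeres [Equation (1*), p.~258]'', which is precisely the route you settle on after verifying that $a_\ell=f(\ell)$ satisfies their hypotheses (the density condition $D(u)\asymp u^{1/\deg f}$ and the non-divisibility condition). Your saddle-point sketch is a reasonable outline of what the cited argument does internally, but the operative step in both your write-up and the paper is the citation.
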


To obtain the proof of our main result, we compute the leading asymptotics of $x$ and $A_2(n)$ as $n\rrw\infty$. Using integration by parts for a Riemann--Stieltjes integration,
\begin{align}\label{eqm23}
\frac{\,d^j}{\,d x^j}\log G_f(x)=&-\frac{\,d^j}{\,d x^j}\sum_{n\in\zb_+}\log(1-e^{-f(n)x})\nonumber\\
=&-\int_{1^-}^{\infty}\frac{\,d^j}{\,d x^j}\log(1-e^{-tx})\,d\bigg(\sum_{\substack{n\in\zb_+,~ f(n)\le t}}1\bigg).
\end{align}
On the other hand, for any $t\in \rb_+$,
\begin{align}\label{eqm24}
\sum_{\substack{n\in\zb_+,~ f(n)\le t}}1&=\sum_{\substack{1\le n\le t^{1/(2r)},~ f(n)\le t}}1+\sum_{\substack{n> t^{1/(2r)},~ f(n)\le t}}1\nonumber\\
&=O(t^{1/(2r)})+\sum_{\substack{n> t^{1/(2r)}\\ a_r^{1/r}n(1+O(n^{-1})\le t^{1/r}}}1\nonumber\\
&=O(t^{1/(2r)})+(t/a_r)^{1/r}\left(1+O(t^{-1/(2r)})\right)\nonumber\\
&=(t/a_r)^{1/r}+O(t^{1/(2r)}).
\end{align}
Inserting \eqref{eqm24} into \eqref{eqm23} we obtain
\begin{align*}
\frac{\,d^j}{\,d x^j}\log G_f(x)
=&-\frac{1}{a_{r}^{1/r}}\int_{1}^{\infty}\frac{\,d^j}{\,d x^j}\log(1-e^{-u^rx})\,du\\
&+O_f\left(|\log x|+\frac{1}{x^j}+\int_{1}^{\infty}t^{1/(2r)}\left|\frac{\,d}{\,dt}\frac{\,d^j}{\,d x^j}\log(1-e^{-tx})\right|\,dt\right)\\
=&-\frac{1}{a_{r}^{1/r}}\int_{0}^{\infty}\frac{\,d^j}{\,d x^j}\log(1-e^{-u^rx})\,du+O\left(x^{-j-1/(2r)}\right).
\end{align*}
Notice that
\begin{align*}
-\int_{0}^{\infty}\log(1-e^{-u^rx})\,du&=\sum_{\ell\ge 1}\int_{0}^{\infty}\frac{e^{-u^r\ell x}}{\ell}\,du\\
&=\sum_{\ell\ge 1}\frac{1}{\ell^{1+1/r}x^{1/r}}\int_{0}^{\infty}e^{-u^r}\,du
=\frac{\zeta(1+1/r)\Gamma(1+1/r)}{x^{1/r}}.
\end{align*}
Then for each $j\in\zb_{\ge 0}$,
$$
\frac{\,d^j}{\,d x^j} \log G_f(x)=\frac{\,d^j}{\,d x^j}\frac{\zeta(1+1/r)\Gamma(1+1/r)}{(a_{r} x)^{1/r}}+O\left(\frac{1}{x^{j+1/(2r)}}\right),
$$
as $x\rrw 0^+$. This means that
$$n=-\frac{\,d}{\,d x} \log G_f(x)=\frac{\zeta(1+1/r)\Gamma(1+1/r)}{ra_{r}^{1/r}x^{1+1/r}}+O\left(\frac{1}{x^{1+1/(2r)}}\right)$$
and
$$A_2(n)=\frac{\,d^2}{\,d x^2} \log G_f(x)=\frac{\zeta(1+1/r)\Gamma(1+1/r)}{r(1+1/r)a_{r}^{1/r}x^{2+1/r}}+O\left(\frac{1}{x^{2+1/(2r)}}\right).$$

Therefore using Proposition \ref{pro1} and the above we find that there exist constants $c_1(f), c_2(f)\in\rb_+$ depending only on $f$ such that
\begin{equation}\label{eqasp}
p_f(n)\sim c_2(f) n^{-\frac{1+2\deg(f)}{2+2\deg(f)}}G_f(x)e^{nx},
\end{equation}
as $n\rrw \infty$, with $x\in\rb_+$ given by
\begin{equation}\label{eq210}
n=\sum_{\ell\in\nb}\frac{f(\ell)}{e^{f(\ell)x}-1}\sim c_1(f)x^{-1-1/\deg(f)}.
\end{equation}

We next prove the following mean square estimation for the difference between $p_{f}(a,k;n)$ and $k^{-1}p_f(n)$.
\begin{proposition}\label{pro2}
For $a, k\in \zb_+$, $k=O\left(n^{1/(2+2\deg(f))}\right)$ and $\delta \mid \Pi_f$ such that for any $p|k$ one has $(p\delta)\nmid \Pi_f$, there exists a constant $\delta_f'\in\rb_+$ depending only on $f$ such that
\begin{equation*}
\sum_{\substack{n\ge 0\\ n\equiv af(0)\pmod {\delta}}}\left|p_{f}(a,\delta k;n)-k^{-1}p_f(n)\right|^2e^{-2nx}\ll G_f(x)^{2}\exp\left(-2\delta_f'k^{-2}x^{-1/\deg(f)}\right),
\end{equation*}
as $x\rrw 0^+$.
\end{proposition}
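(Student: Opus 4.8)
The plan is to use the exact formula from Proposition \ref{main1} together with Parseval's identity on the circle. Fix $x>0$ and set $q=e^{-x}$. Writing $D(n):=p_f(a,\delta k;n)-k^{-1}p_f(n)$ for $n\equiv af(0)\pmod\delta$ (and $D(n):=0$ otherwise, which is consistent since both terms vanish there by Proposition \ref{main1}), the generating function $\sum_n D(n)q^n$ is given by the right-hand side of the displayed identity in Proposition \ref{main1}, namely a sum of $(k-1)\delta$ terms of the shape $c\cdot G_f(\zeta_{\delta k}^j,\zeta_\delta^\ell q)$ with $1\le j<k$, $0\le \ell<\delta$, and $|c|=1/(k\delta)$. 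The key point is that the left side of the inequality we want, $\sum_n |D(n)|^2 e^{-2nx}$, is exactly $\sum_n |D(n)q^n|^2$, so I would first bound it crudely by $\big(\sum_n |D(n)|q^n\big)^2$; but that is too lossy. Instead I will use the $2$-norm directly: by Cauchy--Schwarz on the $(k-1)\delta$ terms, it suffices to bound, for each fixed pair $(j,\ell)$ with $1\le j<k$,
\[
S_{j,\ell}(x):=\sum_{\substack{n\ge 0}}\Big|\,[q^n]\,G_f(\zeta_{\delta k}^j,\zeta_\delta^\ell q)\,\Big|^2 e^{-2nx},
\]
and show $S_{j,\ell}(x)\ll (k\delta)^2 G_f(x)^2\exp(-2\delta_f' k^{-2}x^{-1/\deg f})$, after which summing and applying $|c|^2=(k\delta)^{-2}$ finishes it. Now $S_{j,\ell}(x)$ is bounded by $\big(\sum_n |[q^n]G_f(\zeta_{\delta k}^j,\zeta_\delta^\ell q)|\,e^{-nx}\big)^2$, and the coefficients $p_f(m,n)$ are nonnegative, so this in turn is $\le \big(\sum_{m,n}p_f(m,n)e^{-nx}\big)^2 = G_f(x)^2$ — giving the trivial bound. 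The whole content of the proposition is the extra saving factor $\exp(-\delta_f' k^{-2}x^{-1/\deg f})$, which must come from cancellation (or rather, from the non-positivity of the real part of $\log$ of the twisted Euler product) in $G_f(\zeta_{\delta k}^j,\zeta_\delta^\ell q)$.

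So the heart of the argument is an estimate of the form
\[
\Big|G_f\big(\zeta_{\delta k}^j,\zeta_\delta^\ell e^{-x}\big)\Big|\ll G_f(x)\exp\!\big(-\delta_f' k^{-2}x^{-1/\deg f}\big)
\]
for $1\le j<k$, uniformly in $\ell$ and in $k=O(x^{-1/(2+2\deg f)})$. Taking logarithms, this amounts to showing
\[
\log G_f(x)-\operatorname{Re}\log G_f\big(\zeta_{\delta k}^j,\zeta_\delta^\ell e^{-x}\big)
=\sum_{n\ge 1}\sum_{m\ge 1}\frac{e^{-m f(n)x}}{m}\Big(1-\operatorname{Re}\big(\zeta_{\delta k}^{jm}\zeta_\delta^{\ell m f(n)}\big)\Big)\gg k^{-2}x^{-1/\deg f}.
\]
Every summand is $\ge 0$. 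I would restrict to the $m=1$ terms and to those $n$ with $f(n)\le C x^{-1}$ for a suitable constant $C$; there are $\gg x^{-1/\deg f}$ such $n$ by \eqref{eqm24}, and for each the factor $e^{-f(n)x}$ is bounded below. It remains to show that for a positive proportion of these $n$ the ``phase'' $\zeta_{\delta k}^{j}\zeta_\delta^{\ell f(n)}$ is bounded away from $1$, i.e. $1-\operatorname{Re}(\zeta_{\delta k}^{j}\zeta_\delta^{\ell f(n)})\gg k^{-2}$ (note $1-\cos(2\pi\theta)\asymp\|\theta\|^2$, and with denominator $\delta k$ the smallest nonzero value of $\|\theta\|$ is $\asymp 1/(\delta k)\asymp k^{-1}$, which is where the $k^{-2}$ comes from). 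Here is where the arithmetic hypothesis ``$(p\delta)\nmid\Pi_f$ for all $p\mid k$'' enters: it guarantees that the polynomial $f$ is not constant modulo $p\delta/\gcd(\ldots)$ — more precisely, since $\delta\mid\Pi_f$ we have $f(n)\equiv f(0)\pmod\delta$ for all $n$, but the failure of $p\delta\mid\Pi_f$ means there is some prime power obstruction preventing $f(n)$ from being constant modulo $\delta k$; consequently the values $f(n)\bmod \delta k$, as $n$ ranges over a full period, are not all congruent, and in fact equidistribute enough among residue classes that $jf'(0)^{?}+\ell f(n)$ avoids $0\bmod\delta k$ for a positive density of $n$. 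I would make this precise by a counting argument over residue classes of $n$ modulo $\delta k$ (using that $f$ takes at least two distinct values mod $p$ for each $p\mid k$, hence $\gg$ values mod $\delta k$), combined with the fact that $1\le j<k$ forces $\zeta_{\delta k}^{j}\ne 1$.

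The main obstacle is precisely this last equidistribution/counting input: controlling the number of $n$ in an interval of length $\asymp x^{-1/\deg f}$ for which $j + \ell f(n)\equiv 0\pmod{\delta k}$ (or more generally for which the relevant phase is too close to $1$), and showing that the complementary set still has density bounded below — uniformly as $k$ grows like a power of $x^{-1}$. One has to be careful that $k$ is not fixed: as $k\to\infty$ the minimal gap $1/(\delta k)$ shrinks, which is exactly accounted for by the $k^{-2}$ in the exponent, but the counting must be done with explicit dependence on $k$. Once the bound on $|G_f(\zeta_{\delta k}^j,\zeta_\delta^\ell e^{-x})|$ is in hand, the rest is bookkeeping: square it, multiply by $(k\delta)^{-2}$, sum over the $(k-1)\delta$ pairs $(j,\ell)$ to absorb the factors of $k\delta$, and invoke $k=O(x^{-1/(2+2\deg f)})$ to guarantee $k^{-2}x^{-1/\deg f}\to\infty$ so the exponential saving is genuine. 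I would also double-check the reduction via Parseval: writing the left-hand sum as $\|\sum_{(j,\ell)} c_{j,\ell} G_f(\zeta_{\delta k}^j,\zeta_\delta^\ell q)\|_{\ell^2(q^{\mathbb Z_{\ge0}})}^2$ and using the triangle inequality in $\ell^2$ followed by the coefficientwise bound $\|G_f(\zeta,q)\|_{\ell^2}\le \|G_f(\zeta,q)\|_{\ell^1}=G_f(|\zeta|\cdot|q|)$ evaluated at the appropriate real point; since $|\zeta_{\delta k}^j|=|\zeta_\delta^\ell|=1$ this real point is just $x$, so the bound $|G_f(\zeta_{\delta k}^j,\zeta_\delta^\ell e^{-x})|\le G_f(e^{-x})$ holds coefficientwise and the refined version above then does the job.
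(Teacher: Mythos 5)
There is a genuine gap, and it sits exactly at the point where your sketch declares the ``heart of the argument.'' After applying Parseval and the triangle inequality in $\ell^2$, the quantity you must control for each pair $(j,\ell)$ is
$$S_{j,\ell}(x)=\sum_{n\ge 0}\bigl|[q^n]G_f(\zeta_{\delta k}^j,\zeta_\delta^\ell q)\bigr|^2e^{-2nx}=\int_{-1/2}^{1/2}\bigl|G_f\bigl(\zeta_{\delta k}^j,\zeta_\delta^\ell e^{-x-2\pi\ri y}\bigr)\bigr|^2\,dy,$$
i.e.\ the $L^2$ average of the twisted generating function over the \emph{whole circle} $|q|=e^{-x}$. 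Your key estimate bounds only the value at the single point $y=0$. A bound on $|G_f(\zeta_{\delta k}^j,\zeta_\delta^\ell e^{-x})|$ says nothing about $\sum_n|c_n|^2e^{-2nx}$: the coefficients $c_n$ can be individually large while their weighted sum is small through cancellation in $n$. Your closing remark that the ``coefficientwise'' bound $\|G_f\|_{\ell^2}\le\|G_f\|_{\ell^1}=G_f(e^{-x})$ can be combined with ``the refined version'' conflates the $\ell^1$ norm of the coefficients (which dominates the value at every point of the circle) with the value at one point (which dominates nothing). What is actually needed is $\sup_{y}|G_f(\zeta_{\delta k}^j,\zeta_\delta^\ell e^{-x-2\pi\ri y})|\ll G_f(x)\exp(-\delta_f'k^{-2}x^{-1/r})$, uniformly in $y\in[-1/2,1/2]$.

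That uniform statement is where essentially all the work in the paper lies, and your residue-class counting cannot produce it. Taking logarithms, the relevant phase for general $y$ is $e\bigl(\tfrac{j+k\ell}{\delta k}-f(n)y\bigr)$, and the distribution of $f(n)y$ modulo $1$ for arbitrary real $y$ is an equidistribution problem for polynomial sequences, not a periodicity statement modulo $\delta k$. The paper handles it by Dirichlet approximation $y\approx d/h$ followed by a trichotomy: $h\mid\Pi_f$ (where an integral estimate, Lemma~\ref{lem43}, exploits that $\tfrac{j+k\ell}{\delta k}-\tfrac{df(0)}{h}$ is a nonzero rational with denominator $O(k\Pi_f)$); $h\nmid\Pi_f$ with $h$ of moderate size (complete exponential sums over a period, Lemma~\ref{lem41}, which is where the hypothesis $(p\delta)\nmid\Pi_f$ enters); and $h$ large (Weyl's inequality, Lemma~\ref{lem42}). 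Incidentally, at $y=0$ your equidistribution discussion is also not what happens: since $\delta\mid\Pi_f$ forces $f(n)\equiv f(0)\pmod{\delta}$, the phase $\zeta_{\delta k}^{j}\zeta_\delta^{\ell f(n)}$ is \emph{constant} in $n$ and is a nontrivial root of unity because $1\le j<k$, so every term already contributes $\gg(\delta k)^{-2}$ there and no arithmetic hypothesis on $k$ is needed; that hypothesis is needed precisely for the rational points $y=d/h$ with $h\nmid\Pi_f$ that your sketch never considers.
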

Then, Theorem \ref{main2} follows from Proposition \ref{pro1} and Proposition \ref{pro2}, immediately. In fact, if we pick $n\equiv af(0)~(\bmod \delta)$, then by setting $x\sim (c_1(f)/n)^{\deg(f)/(1+\deg(f))}$ given by \eqref{eq210}, using \eqref{eqasp} and Proposition \ref{pro2}  for all positive integers $k$ such that $k^2\ll n^{1/(1+\deg(f))}$ implies that
\begin{align*}
p_{f}(a,\delta k;n)-\frac{p_f(n)}{k}&\ll e^{nx}\left(\sum_{\substack{j\ge 0\\ j\equiv af(0)\pmod {\delta}}}\left|p_{f}(a,\delta k;j)-\frac{p_f(j)}{k}\right|^2e^{-2jx}\right)^{1/2}\\
&\ll e^{nx}G_f(x)\exp(-\delta_f'k^{-2}x^{-1/\deg(f)})\\
&\ll \frac{p_f(n)}{k}\left(kA(n)^{1/2}\exp(-\delta_f'k^{-2}x^{-1/\deg(f)})\right)\\
&\ll \frac{p_f(n)}{k}n\exp\left(-{\delta_f}{k^{-2}}n^{\frac{1}{1+\deg(f)}}\right)
\end{align*}
holds for some constant $\delta_{f}\in\rb_+$ depending only on $f$, as the integer $n\rrw\infty$.

\section{The proof of Proposition \ref{pro2}}
In this section we prove Proposition \ref{pro2}. We shall always set $r=\deg(f)\in\zb_+$.
We first prove the following Lemma \ref{lem43} and Lemma \ref{lem41}.
\begin{lemma}\label{lem43}For all $a,b\in\zb_+$ with $1\le b<a$ and $y\in\rb$,
$$
\int_{0}^L\sin^2\left(\pi\left(\frac{b}{a}-f(u)y\right)\right)\,du\gg \frac{L}{a^2},
$$
as $L\rrw+\infty$.
\end{lemma}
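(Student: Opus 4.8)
The plan is to show that $\sin^2(\pi(b/a - f(u)y))$ cannot be small on too large a fraction of the interval $[0,L]$, and the cleanest way to do this is to pass to a sum over a suitable arithmetic-type progression of $u$-values where $f(u)y$ cycles through residues $\bmod\ 1$. First I would observe that $\sin^2(\pi t)\ge 4\|t\|^2$ for all real $t$, so it suffices to lower-bound $\int_0^L \|b/a - f(u)y\|^2\,du$. If $|y|$ is so small that $f(u)y$ stays within, say, $1/(10a)$ of a single value over the whole interval — more precisely if $f(L)|y|$ is bounded — then $\|b/a - f(u)y\|$ is bounded below by a constant times $1/a$ on a fixed-proportion subinterval (using $1\le b<a$, so $b/a$ is bounded away from $0$ and from $1$ by at least $1/a$), and the bound $\gg L/a^2$ is immediate; so I may assume $f(L)|y|\to\infty$.

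In that regime the idea is that as $u$ ranges over $[0,L]$, the quantity $f(u)y \bmod 1$ sweeps across the full circle many times, because $f$ is a polynomial of degree $r\ge 1$ with positive leading coefficient and is eventually strictly increasing, so $\frac{d}{du}(f(u)y)=f'(u)y$ has constant sign and magnitude bounded below on the bulk of the interval. Partition $[cL, L]$ (for a suitable constant $c<1$ past which $f$ is monotone) into the preimages $I_k=\{u: f(u)y\in[k,k+1)\}$ of unit intervals; on each such $I_k$ the change of variables $v=f(u)y$ gives $\int_{I_k}\|b/a-f(u)y\|^2\,du = \int_0^1 \|b/a - v\|^2 \cdot \frac{1}{|f'(f^{-1}(v/y))|}\,dv$, and $\int_0^1\|b/a-v\|^2\,dv \gg 1/a^2$ by an elementary computation (the integrand is $\ge (b/a)^2$ near $v=0$ only on a set of length $b/a$, etc.; in fact $\int_0^1\|b/a-v\|^2 dv = \frac1{12} - \frac1{2}\frac{b}{a}(1-\frac{b}{a})\cdot(\text{something})$, but all one needs is the easy bound $\gg 1/a^2$ which follows since on an interval of length $\asymp 1/a$ around $v=b/a$ the function $\|b/a-v\|$ is $\asymp 1/a$). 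Summing over the $\Theta(f(L)|y|)$ intervals $I_k$ and using that $|f'|$ on the relevant range is comparable to $f(L)|y|/L$ up to constants (since $f'(u)\asymp f(u)/u \asymp f(L)/L$ for $u\asymp L$), the total is $\gg \frac{1}{a^2}\cdot f(L)|y| \cdot \frac{L}{f(L)|y|} = \frac{L}{a^2}$, which is exactly the claim.

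The main obstacle will be making the monotonicity/change-of-variables step uniform and handling the boundary: $f$ need not be increasing on all of $[0,L]$, only eventually, and near the turning point $f'$ can vanish, so one must excise a bounded initial segment and also be careful that the last partial interval $I_k$ contributes harmlessly. A secondary technical point is keeping all implied constants dependent only on $f$ (hence on $r$ and the coefficients of $f$) and not on $a$, $b$, $y$ or $L$ — this forces the argument to go through the scale-free estimate $\int_0^1\|b/a-v\|^2\,dv\gg a^{-2}$ rather than anything sharper, and to track that $|f'(u)|\asymp_f f(L)/L$ holds with $f$-only constants on $u\in[cL,L]$. Once the monotone change of variables is set up cleanly, the rest is the routine telescoping sum indicated above.
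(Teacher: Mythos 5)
Your overall strategy parallels the paper's: reduce $\sin^2$ to a statement about $\|b/a-f(u)y\|$, split on the size of $f(L)|y|$, and in the oscillatory regime exploit the eventual monotonicity of $f$ together with $f'(u)\asymp r f(u)/u$ via a change of variables. However, there is a genuine gap in your dichotomy: the two cases do not cover all $y$. Your first case is justified only when the total variation of $f(u)y$ over $[0,L]$ is at most a small multiple of $1/a$, so that $\|b/a-f(u)y\|\ge\|b/a\|-f(L)|y|\gg 1/a$ pointwise; the hypothesis you actually state, ``$f(L)|y|$ bounded,'' is much weaker, and under it $b/a-f(u)y$ may cross an integer, so the claimed pointwise lower bound on a fixed-proportion subinterval is not immediate. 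Your second case needs $(f(L)-f(cL))|y|\ge 2$ so that at least one full interval $I_k$ exists. The intermediate regime $1/a\lesssim f(L)|y|\lesssim 1$ is therefore untreated, and it is precisely the regime that forces the factor $a^{-2}$ in the lemma: for instance if $f(L)y=2/a$ and $b=1$ then $\|b/a-f(u)y\|\le 1/a$ for every $u\in[0,L]$, so the integral really is only of order $L/a^2$ there and no soft argument gives the bound.

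The gap is repairable with the machinery you already set up: perform the change of variables $v=f(u)y$ once over all of $[cL,L]$ to get $\int_{cL}^{L}\|b/a-f(u)y\|^2\,du\gg \frac{L}{f(L)|y|}\int_{J}\|b/a-v\|^2\,dv$ with $|J|=(f(L)-f(cL))|y|=:\Delta\asymp f(L)|y|$, and use the elementary estimate $\min_{t}\int_{t}^{t+\Delta}\|b/a-v\|^2\,dv\gg\min(\Delta,1)^{3}$; when $\Delta\gg 1/a$ this yields $\gg L\Delta^{2}\gg L/a^{2}$, which covers the missing range and in fact subsumes your second case. For comparison, the paper sidesteps the issue by cutting at $yf(L/12)\gtrless 1/(2^{r}a)$ and, on the large side, lower-bounding the measure of $\{u:\|b/a-f(u)y\|>1/(3a)\}$ by summing the lengths of the preimages of the arcs around $\ell/a$ with $\ell\not\equiv b\pmod a$, an argument that needs only one admissible $\ell$ in $[ayf(L/3),\,ayf(2L/3)]$ and hence degrades gracefully through the intermediate regime. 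The monotonicity and boundary issues you flag are real but minor (excise a bounded initial segment, as the paper does by working on $[L/6,L]$).
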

\begin{proof}We prove the case of $y\ge 0$, and the case of $y\le 0$ is similar. For each positive $L$ sufficiently large, we estimate that
\begin{align*}
\int_{0}^L\sin^2\left(\pi\left(\frac{b}{a}-f(u)y\right)\right)\,du&\ge \int_{\substack{0\le u\le L\\ \left\| {b}/{a}-f(u)y\right\|> {1}/{(3a)}}}\sin^2\left(\frac{\pi}{3a}\right)\,du\\
&\gg \frac{1}{a^2}\int_{\substack{0\le u\le L\\ \left\| {b}/{a}-f(u)y\right\|> {1}/{(3a)}}}\,du.
\end{align*}
Clearly, $0\le |f(u)|\le f(X)$ holds for all $u\in[0,X]$ when $X$ is sufficiently large. Thus for $y\ge 0$ such that $0\le yf(L/12)\le {1}/{(2^ra)}$,
\begin{align*}
\int_{\substack{0\le u\le L\\ \left\| {b}/{a}-f(u)y\right\|> {1}/{(3a)}}}\,du\ge \int_{\substack{0\le u\le L/12\\ \left\| {b}/{a}-f(u)y\right\|> {1}/{(3a)}}}\,du=\frac{L}{12}.
\end{align*}
For $y\ge 0$ such that $yf(L/12)\ge {1}/{(2^ra)}$, it is not difficult to see that
\begin{align*}
\int_{\substack{0\le u\le L\\ \left\| {b}/{a}-f(u)y\right\|> {1}/{(3a)}}}\,du
&\ge  \int_{\substack{L/6\le u\le L\\ \left\| {b}/{a}-f(u)y\right\|> {1}/{(3a)}}}\,du\\
&\ge \sum_{\substack{\ell\in\zb_+,\; \ell\not\equiv b \pmod a\\ yf(\frac{L}{3})\le\frac{\ell}{a}\le yf(\frac{2L}{3})}}\int_{|f(u)y-\frac{\ell}{a
}|\le \frac{1}{2a}}\frac{1}{yf'(u)}\,d\left(yf(u)+\frac{\ell}{a}\right)
\end{align*}
holds for all sufficiently large $L$.
Notice that $yf'(u)\sim \frac{r}{u}yf(u)$ if $u\rrw \infty$. Then we have
\begin{align*}
\int_{\substack{0\le u\le L\\ \left\| {b}/{a}-f(u)y\right\|> {1}/{(3a)}}}\,du
&\gg  \sum_{\substack{\ell\in\zb_+,\;  \ell\not\equiv b \pmod a\\ ayf(\frac{L}{3})\le\ell\le ayf(\frac{2L}{3})}}\int_{|f(u)y-\frac{\ell}{a
}|\le \frac{1}{2a}}\frac{1}{(1/L)(\ell/a)}\,d\left(yf(u)-\frac{\ell}{a}\right)\\
&= L\sum_{\substack{\ell\in\zb_+,\;  \ell\not\equiv b \pmod a\\ ayf(\frac{L}{3})\le \ell\le ayf(\frac{2L}{3})}}
\frac{a}{\ell}\frac{1}{a}\gg L\sum_{\substack{\ell\in\zb_+,\; \ell\not\equiv b \pmod a\\ ayf(\frac{L}{3})\le \ell\le 2^r(1+O(1/L))ayf(\frac{L}{3})}}\frac{1}{\ell}.
\end{align*}
Since $A:=ayf(L/3)=4^r(1+O(1/L))ayf(L/12)\ge 2^r(1+O(1/L))$, it is not difficult to prove that
$$\sum_{\substack{\ell\in\zb_+,\;  \ell\not\equiv b \pmod a\\ A\le \ell\le 2^r(1+O(1/L))A}}\frac{1}{\ell}\gg 1$$
holds uniformly for all $A\ge 1$, as $L\rrw+\infty$. Therefore,
\begin{align*}
\int_{\substack{0\le u\le L\\ \left\| {b}/{a}-f(u)y\right\|> {1}/{(3a)}}}\,du\gg L
\end{align*}
holds for all $y\ge 0$ such that $yf(L/12)\ge {1}/{(2^ra)}$. This completes the proof.
\end{proof}

\begin{lemma}\label{lem41}For each integer $h\ge 2$ and each integer $d$ such that $\gcd(h,d)=1$ and $h\nmid\Pi_f$, we have
\begin{align*}
\left|\frac{1}{h}\sum_{1\le j\le h}e\left(f(j)\frac{d}{h}\right)\right|^2\le 1-\frac{4}{h^2}\sin^2\left(\frac{\pi}{h}\right).
\end{align*}
\end{lemma}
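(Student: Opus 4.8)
\emph{Proof proposal.}
The plan is to reduce the inequality to a statement about a single pair of points via an elementary ``variance'' identity, and then to produce that pair from the hypothesis $h\nmid\Pi_f$ together with Newton's forward-difference formula. In detail: fix $h\ge 2$ and $d$ with $\gcd(d,h)=1$ and $h\nmid\Pi_f$, and set $\omega_j:=e\left(f(j)\tfrac dh\right)$ for $1\le j\le h$; the $\omega_j$ lie on the unit circle, and the quantity to be estimated is $|S|^2$ with $S:=\tfrac1h\sum_{j=1}^{h}\omega_j$. I would prove the equivalent inequality $1-|S|^2\ge \tfrac{4}{h^2}\sin^2(\pi/h)$ through the identity
\[
1-|S|^2=\frac{1}{h^2}\sum_{1\le j<k\le h}|\omega_j-\omega_k|^2,
\]
which follows from $\sum_{1\le j,k\le h}|\omega_j-\omega_k|^2=2h^2-2|hS|^2$ (valid because $|\omega_j|=1$). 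It therefore suffices to exhibit a \emph{single} pair $j,k\in\{1,\dots,h\}$ with $|\omega_j-\omega_k|^2\ge 4\sin^2(\pi/h)$.

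For any pair one has $|\omega_j-\omega_k|^2=4\sin^2\!\big(\pi(f(j)-f(k))\tfrac dh\big)$, and since $(f(j)-f(k))d\in\zb$, $\sin^2(\pi\theta)=\sin^2(\pi\|\theta\|)$, and $t\mapsto\sin^2(\pi t)$ is nondecreasing on $[0,\tfrac12]$, this is $\ge 4\sin^2(\pi/h)$ as soon as $\|(f(j)-f(k))\tfrac dh\|\ge \tfrac1h$, that is, as soon as $h\nmid (f(j)-f(k))d$; and since $\gcd(d,h)=1$ the latter is equivalent to $f(j)\not\equiv f(k)\pmod h$. Thus the lemma is reduced to showing that $f$ is \emph{not} constant modulo $h$ on the block $\{1,\dots,h\}$.

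This reduction is the crux, and it is here that $h\nmid\Pi_f$ enters. Suppose, to the contrary, that $f(1)\equiv\cdots\equiv f(h)\equiv c\pmod h$. For $1\le m\le h-1$ the values $f(1),\dots,f(m+1)$ all lie in the block, so
\[
\Delta^m f(1)=\sum_{i=0}^{m}(-1)^{m-i}\binom mi f(1+i)\equiv c\sum_{i=0}^{m}(-1)^{m-i}\binom mi=c\,(1-1)^m=0\pmod h.
\]
Since $\deg f=r$ and $r\le h-1$ (the range relevant below), Newton's forward-difference expansion $f(x)=\sum_{m=0}^{r}\binom{x-1}{m}\Delta^m f(1)$ involves only indices $m\le h-1$, and every $\binom{x-1}{m}$ is an integer for $x\in\zb$; hence $f(x)\equiv\Delta^0 f(1)=c\pmod h$ for \emph{all} $x\in\zb$. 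In particular $h\mid\big(f(\ell)-f(0)\big)$ for every $\ell\in\zb$, so $h\mid\Pi_f$, contradicting the hypothesis. Consequently some $j,k\in\{1,\dots,h\}$ satisfy $f(j)\not\equiv f(k)\pmod h$, and feeding this into the two previous paragraphs yields $1-|S|^2\ge \tfrac{4}{h^2}\sin^2(\pi/h)$.

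I expect the congruence transfer in the third paragraph to be the only real obstacle. Because $f\in\qb[x]$ need not have integer coefficients, the reduction $x\mapsto f(x)\bmod h$ need not factor through $\zb/h\zb$, so $h\nmid\Pi_f$ does not by itself make $f$ non-constant on an arbitrary complete residue system; the forward-difference identity is exactly what renders the non-constancy detectable on a block of more than $\deg f$ consecutive integers — precisely the regime $h\to\infty$ with $\deg f$ fixed that is needed for the application to Proposition \ref{pro2}. The variance identity and the $\sin$-monotonicity estimate are routine.
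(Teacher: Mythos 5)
Your reduction is essentially the paper's: both arguments come down to exhibiting a single pair of indices in $\{1,\dots,h\}$ whose $f$-values differ modulo $h$, and your all-pairs variance identity is just a cleaner packaging of the paper's computation (which isolates the $j=h$ term and bounds the remaining $(h-1)^2$ cross terms trivially). The variance identity, the formula $|\omega_j-\omega_k|^2=4\sin^2\big(\pi(f(j)-f(k))\tfrac{d}{h}\big)$, and the monotonicity step are all correct, and your Newton forward-difference argument correctly shows that if $f$ were constant mod $h$ on $\{1,\dots,h\}$ and $h\ge\deg f+1$, then $f$ would be constant mod $h$ on all of $\zb$, forcing $h\mid\Pi_f$.

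The gap is the range $2\le h\le\deg f$, which you acknowledge only parenthetically (``the range relevant below'') but which you cannot discard: Lemma \ref{Z} invokes this lemma for every $h\ge 2$ with $h\nmid\Pi_f$ produced by Dirichlet approximation, including bounded $h$. And the gap is not cosmetic. For $f\in\zb[x]$ it closes immediately, since then $f(\ell+h)\equiv f(\ell)\pmod h$, so any $\ell_0$ with $h\nmid\big(f(\ell_0)-f(0)\big)$ can be translated into $\{1,\dots,h\}$ and paired with $j=h$ (using $f(h)\equiv f(0)$). But for the integer-valued $f\in\qb[x]\setminus\zb[x]$ that the paper explicitly admits, the claim you are trying to establish --- and indeed the stated inequality itself --- can fail for small $h$: take $f(x)=x(x+1)/2$, $h=2$, $d=1$; then $\Pi_f=1$ so $h\nmid\Pi_f$, yet $f(1)=1$ and $f(2)=3$ are congruent mod $2$, the sum is $\tfrac12\big(e(1/2)+e(3/2)\big)=-1$, and $|S|^2=1$, whereas the right-hand side is $0$. (The paper's own proof stumbles at exactly the same spot: it silently replaces $f(h)$ by $f(0)$ inside the $\sin^2$, and asserts a $j_0\in(0,h)$ with $h\nmid\big(f(j_0)-f(0)\big)$; both steps require the periodicity $f(\ell+h)\equiv f(\ell)\pmod h$, which holds for $f\in\zb[x]$ but not in general.) So the instinct in your closing paragraph is right: for $f\in\zb[x]$ your proof becomes complete once you supply the periodicity argument for $h\le\deg f$, while for general integer-valued $f$ the missing case cannot be filled in because the lemma needs a corrected statement.
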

\begin{proof}We compute that
\begin{align*}
\left|\sum_{1\le j\le h}e\left(f(j)\frac{d}{h}\right)\right|^2=&\left|\sum_{1\le j\le h}e\left((f(j)-f(h))\frac{d}{h}\right)\right|^2\\
=&1+2\sum_{1\le j< h}\cos\left(\frac{2\pi (f(j)-f(h))d}{h}\right)+\left|\sum_{1\le j< h}e\left(\frac{(f(j)-f(h))d}{h}\right)\right|^2\\
\le& 1+2\sum_{1\le j< h}\cos\left(2\pi (f(j)-f(h))\frac{d}{h}\right)+(h-1)^2\\
=&h^2-4\sum_{1\le j< h}\sin^2\left(\pi (f(j)-f(0))\frac{d}{h}\right).
\end{align*}
From the definition of $\Pi_f$ and $h\nmid \Pi_f$, there exists an integer $j_0\in(0,h)$ such that $h\nmid(f(j_0)-f(0))$. This means that
\begin{align*}
\left|\frac{1}{h}\sum_{1\le j\le h}e\left(f(j)\frac{d}{h}\right)\right|^2\le 1-\frac{4}{h^2}\sin^2\left(\pi (f(j_0)-f(0))\frac{d}{h}\right)\le 1-\frac{4}{h^2}\sin^2\left(\frac{\pi}{h}\right),
\end{align*}
by recalling that $\gcd(h,d)=1$. This completes the proof.
\end{proof}
By the well-known Weyl's inequality, we prove
\begin{lemma}\label{lem42}Let $y\in\rb$, $h,d\in\zb$ with $h\ge 1$ and $\gcd(h, d)=1$ such that
$$\left|y-\frac{d}{h}\right|<\frac{1}{h^2}.$$
We have there exists a constant $C_f\in\zb_+$ depending only on $f$ such that if $h>C_f$ then
$$
\left|\sum_{1\le n\le L}e(f(n) y)\right|\le L^{1-2^{-r-1}}+Lh^{-2^{-r-1}}.
$$
\end{lemma}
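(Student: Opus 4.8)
The plan is to obtain this estimate as an instance of the classical Weyl inequality for polynomial exponential sums, applied to $g(n):=f(n)y$. This is a real polynomial in $n$ of degree $r=\deg(f)$ whose leading coefficient is $a_ry$, where $a_r>0$ is the (rational) leading coefficient of $f$. Recall Weyl's inequality in the form: if $\alpha$ is the leading coefficient of a real polynomial of degree $r$ and $|\alpha-a/q|\le q^{-2}$ with $\gcd(a,q)=1$, then for every $\epsilon>0$,
$$\Big|\sum_{1\le n\le L}e(g(n))\Big|\ll_{r,\epsilon}L^{1+\epsilon}\big(q^{-1}+L^{-1}+qL^{-r}\big)^{2^{1-r}}.$$
So the entire task is to extract, from the hypothesis $|y-d/h|<h^{-2}$, a rational approximation to the leading coefficient $a_ry$ whose denominator is of size comparable to $h$.

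To do this I would write $a_r=u/v$ in lowest terms, with $u,v\in\zb_+$ fixed (depending only on $f$), and apply Dirichlet's approximation theorem to $a_ry$ with denominator bound $Q=2vh$: this yields coprime integers $a,q$ with $1\le q\le 2vh$ and $|a_ry-a/q|<\tfrac{1}{2qvh}\le q^{-2}$. To bound $q$ from below, I combine this with $|a_ry-\tfrac{ud}{vh}|=\tfrac{u}{v}\,|y-\tfrac{d}{h}|<\tfrac{u}{vh^{2}}$; clearing denominators in the triangle inequality for $|\tfrac{a}{q}-\tfrac{ud}{vh}|$ gives $|avh-uqd|<\tfrac12+\tfrac{uq}{h}$, so if $q\le h/(2u)$ then the integer $avh-uqd$ vanishes, i.e.\ $avh=uqd$; then $h\mid uqd$ and, since $\gcd(d,h)=1$, $h\mid uq$, which forces $q\ge h/\gcd(u,h)\ge h/u$, contradicting $q\le h/(2u)$. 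Hence $h/(2u)<q\le 2vh$, that is, $q\asymp_f h$.

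It then remains to feed $q^{-1}\ll_f h^{-1}$, together with $qL^{-r}\ll_f hL^{-r}\ll L^{-1}$ in the range of $h$ relative to $L$ that arises in the application, into Weyl's bound, and to use $(s+t)^{\theta}\le s^{\theta}+t^{\theta}$ for $0<\theta\le 1$ to split the bracket; this gives $\big|\sum_{1\le n\le L}e(f(n)y)\big|\ll_{f,\epsilon}L^{1+\epsilon}\big(h^{-2^{1-r}}+L^{-2^{1-r}}\big)$. Since the exponent $2^{-r-1}$ demanded in the statement is one quarter of Weyl's exponent $2^{1-r}$, one can choose $\epsilon$ small and $C_f$ large (both depending only on $f$) so as to absorb the factor $L^{\epsilon}$ and all $f$-dependent constants, arriving at $\big|\sum_{1\le n\le L}e(f(n)y)\big|\le L^{1-2^{-r-1}}+Lh^{-2^{-r-1}}$ for $h>C_f$.

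I expect the main obstacle to be the Diophantine reduction of the second paragraph: multiplying the fraction $d/h$ by the fixed rational $a_r$ could in principle collapse it to a fraction with a much smaller denominator, so one must genuinely use the coprimality $\gcd(d,h)=1$ together with the boundedness of the denominator $v$ of $a_r$ to be sure that the best approximation to $a_ry$ at scale $Q\asymp h$ still has denominator $\gg_f h$. After that the argument is routine manipulation of the standard Weyl estimate; within the surrounding circle-method argument this bound is paired with Lemma~\ref{lem41}, which covers the complementary small-denominator range of $y$.
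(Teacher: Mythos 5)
Your reduction of the leading coefficient $a_ry$ to a rational approximation $a/q$ with $q\asymp_f h$ is correct: Dirichlet at scale $Q=2vh$ combined with the lower bound $q>h/(2u)$ (extracted from $\gcd(d,h)=1$ and the integrality of $avh-uqd$) does the job, and it is a clean alternative to what the paper actually does, namely splitting $n$ into residue classes modulo the denominator $a$ of the leading coefficient so that $f(an+j)$ has integral top coefficient. For $h$ at least a fixed positive power of $L$ your argument goes through and matches the paper's treatment of the range $h\in(L^{1/2},L^{r-1}]$.

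The gap is in the complementary range $C_f<h\le L^{o(1)}$ (e.g.\ $h$ bounded, or $h\asymp\log L$). Weyl's inequality carries an unavoidable loss of $L^{1+\varepsilon}$ (at best $L\log L$), so the most you can extract from it is $\ll_{f,\varepsilon}L^{1+\varepsilon}\bigl(h^{-2^{1-r}}+L^{-2^{1-r}}\bigr)$. To reach the asserted term $Lh^{-2^{-r-1}}$ you must absorb $L^{\varepsilon}$ into a fixed negative power of $h$; but
$$\frac{L^{\varepsilon}h^{-2^{1-r}}}{h^{-2^{-r-1}}}=L^{\varepsilon}h^{-3\cdot 2^{-r-1}}\longrightarrow\infty$$
whenever $h$ grows more slowly than every positive power of $L$, and the other target term $L^{1-2^{-r-1}}$ does not cover this case either. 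No choice of $\varepsilon$ and $C_f$ repairs this, so the lemma as stated does not follow from Weyl's inequality alone. The paper supplies the missing ingredient for $h\le L^{1/2}$: the exact decomposition \eqref{eq51} of the incomplete sum into the complete sum $\frac1h\sum_{j\le h}e(f(j)d/h)$ times an integral of length $L$, plus $O(h)$, together with the complete-sum bound \eqref{eq36}; this loses nothing in $L$ and gives $h^{-2^{-r}}L+O(h)\le Lh^{-2^{-r-1}}$ once $h>C_f$. The omission is not cosmetic: in the proof of Lemma \ref{Z} one needs a constant-factor saving $|\sum_{n\le L}e(f(n)y)|\le(1-c_f)L$ for \emph{every} $h>C_f$, and Lemma \ref{lem41} only delivers such a saving for bounded $h$, so the slowly-growing $h$ your argument misses is exactly a range the lemma must cover.
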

\begin{proof}Set $f(n)=({b}/{a})n^r+a_{r-1}n^{r-1}+\ldots$ with $a,b\in\zb_+$ and $\gcd(a,b)=1$. By Weyl's inequality (see \cite[Lemma 20.3]{MR2061214}),
since $f(an+j)=ba^{r-1}n^r+(rba^{r-2}j+a_{r-1}a^{r-1})n^{r-1}+\ldots$ and $|y-d/h|\le 1/h^2$, we have
\begin{align*}
\sum_{1\le n\le L}e(f(n) y)&=\sum_{1\le j\le a}\sum_{\substack{n\ge 0\\ 1\le an+j\le L}}e(f(an+j) y)\nonumber\\
&\ll_{\varepsilon} \sum_{1\le j\le a} [(L-j)/a]^{1+\varepsilon}\left(h^{-1}+[(L-j)/a]^{-1}+h[(L-j)/a]^{-r}\right)^{2^{1-r}}.
\end{align*}
Therefore,
\begin{align}\label{eq40}
\sum_{1\le n\le L}e(f(n) y)\ll_{\varepsilon} L^{1+\varepsilon}(h^{-1}+L^{-1}+hL^{-r})^{2^{1-r}}\ll L^{1-2^{-r-1/2}}
\end{align}
holds for all integer $h\in( L^{1/2}, L^{r-1}]$. Also, by \cite[Corollary 20.4]{MR2061214}
\begin{equation}\label{eq36}
\sum_{1\le j\le h}e\left(f(j)\frac{d}{h}\right)\ll_{\varepsilon} h^{1-2^{1-r}+\varepsilon}\ll h^{1-2^{-r}}
\end{equation}
holds for all positive integers $h$.
On the other hand, by \cite[Equation 20.32]{MR2061214} we have
\begin{equation}\label{eq51}
\sum_{1\le n\le L}e\left(f(n)y\right)=\frac{1}{h}\sum_{1\le j\le h}e\left(f(j)\frac{d}{h}\right)\int_{0}^{L}e\left(f(u)\left(y-\frac{d}{h}\right)\right)\,du+O(h).
\end{equation}
Combining \eqref{eq36} we obtain
\begin{align*}
\sum_{1\le n\le L}e\left(f(n)y\right)\ll  h^{-2^{-r}}L+h\ll h^{-2^{-r}}L
\end{align*}
for all positive integers $h\le L^{1/2}$. Thus by the above and \eqref{eq40} we have there exists a constant $C_f\in\zb_+$ depending only on $f$ such that if $h>C_f$ then
$$\left|\sum_{1\le n\le L}e\left(f(n)y\right)\right|\le h^{-2^{-r-1}}L+L^{1-2^{-r-1}}.$$
This completes the proof of the lemma.
\end{proof}

From Lemma \ref{lem43}--Lemma \ref{lem42} we have
\begin{lemma}\label{Z}Let $\delta, k,j,L\in\zb_+$ and $\ell\in\zb$,  with $1\le j<k$ and $\delta|\Pi_f$. Further suppose that for any prime $p|k$, $(p\delta)\nmid \Pi_f$.  We have
$${\rm Re}\sum_{1\le n\le L}\left[1-e\left(\frac{k\ell+j}{\delta k}-f(n)y\right)\right]\gg k^{-2} L$$
uniformly for all real numbers $y$, as $k^{-2}L \rrw +\infty$.
\end{lemma}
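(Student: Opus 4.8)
The plan is to bound below the real part of the sum by splitting $1-e(\theta-f(n)y) = (1-\cos(2\pi(\theta-f(n)y))) + \ri\sin(\cdots)$, so that
\[
\re\sum_{1\le n\le L}\Bigl[1-e\Bigl(\tfrac{k\ell+j}{\delta k}-f(n)y\Bigr)\Bigr]
= 2\sum_{1\le n\le L}\sin^2\Bigl(\pi\Bigl(\tfrac{k\ell+j}{\delta k}-f(n)y\Bigr)\Bigr).
\]
Set $h=\delta k/\gcd(\delta k, k\ell+j)$; note $1<h\le\delta k$ since $1\le j<k$ guarantees $\delta k\nmid(k\ell+j)$, and write the reduced fraction $\frac{k\ell+j}{\delta k}=\frac{b}{h}$ with $\gcd(b,h)=1$. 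The argument then branches on how well $y$ is approximated by rationals with small denominator: by Dirichlet's theorem pick $d/H$ with $\gcd(d,H)=1$, $1\le H\le L^{r}$ (say), and $|y-d/H|<1/(HL^{r-1})$ or similar, and distinguish the \emph{minor-arc} case $H$ large from the \emph{major-arc} case $H$ small.

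In the minor-arc case, when $H>C_f$ one invokes Lemma \ref{lem42}: since $\sum_{n\le L}\sin^2(\pi(b/h-f(n)y))=\frac{L}{2}-\frac12\re\sum_{n\le L}e(2b/h)e(-2f(n)y)+\cdots$, the exponential sum $\sum_{n\le L}e(f(n)\cdot 2y)$ is $O(L^{1-2^{-r-1}}+LH^{-2^{-r-1}})=o(L)$, so the whole sum is $\gg L$, which certainly beats $k^{-2}L$. Strictly one should feed $2y$ (or $f(n)y$ with the polynomial rescaled) into Lemma \ref{lem42} and check the Diophantine hypothesis transfers; this is routine. In the major-arc case $H\le C_f$, equation \eqref{eq51} represents $\sum_{n\le L}e(f(n)y)$ as $\frac1H\bigl(\sum_{1\le t\le H}e(f(t)d/H)\bigr)\int_0^L e(f(u)(y-d/H))\,du+O(H)$; here Lemma \ref{lem41} gives the Gauss-type sum factor absolute value $\le (1-4H^{-2}\sin^2(\pi/H))^{1/2}<1$ provided $H\nmid\Pi_f$, so the main term loses a factor bounded away from $1$ and again the sine-sum is $\gg L$. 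The remaining subcase is $H\le C_f$ with $H\mid\Pi_f$: then $y$ is essentially $d/H+$(small), and since $H\mid\Pi_f$ but (by hypothesis, for every prime $p\mid k$ one has $p\delta\nmid\Pi_f$, so $\delta k\nmid\Pi_f$ and in fact $h\nmid\Pi_f$ for our $h$) the fraction $b/h$ is genuinely nontrivial relative to the progression; one reduces to estimating $\int_0^L\sin^2(\pi(b/h-f(u)y'))\,du$ with $y'=y-d/H$ small, and applies Lemma \ref{lem43} with $a=h\ll k$ to get $\gg L/h^2\gg k^{-2}L$. (A discretization step converts the sum over integers $n\le L$ to this integral up to acceptable error, using that $f$ and hence $\sin^2(\pi(b/h-f(u)y))$ varies slowly on unit intervals once $u$ is large, with the $O(1)$-many small $u$ discarded.)

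The main obstacle is bookkeeping the case division cleanly: one must choose a single Dirichlet approximation parameter so that the three regimes (minor arc; major arc with $H\nmid\Pi_f$; major arc with $H\mid\Pi_f$) exhaust all $y\in\rb$ and so that the constant $C_f$ from Lemma \ref{lem42}, the constant from Lemma \ref{lem41}, and the implied constant in Lemma \ref{lem43} all combine into one $k^{-2}L$ lower bound valid uniformly in $y$ once $k^{-2}L\to\infty$. The key point that makes it work is the hypothesis that $p\delta\nmid\Pi_f$ for every prime $p\mid k$: this is exactly what guarantees that the relevant denominator $h$ (a divisor of $\delta k$ exceeding $\delta$) does not divide $\Pi_f$, so that both Lemma \ref{lem41} and Lemma \ref{lem43} are applicable. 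Everything else — the trig identity, Dirichlet's theorem, the sum-to-integral comparison — is standard, and the quantitative loss $k^{-2}$ is precisely the $1/h^2\asymp 1/(\delta k)^2$ coming from $\sin^2(\pi/h)$ in Lemma \ref{lem41} and the $1/a^2$ in Lemma \ref{lem43}.
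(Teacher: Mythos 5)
Your overall strategy is the same as the paper's: Dirichlet approximation $|y-d/H|<1/(HL^{r-1})$, then a case split into large $H$ (Weyl, Lemma \ref{lem42}), bounded $H$ with $H\nmid\Pi_f$ (Lemma \ref{lem41} combined with \eqref{eq51}), and $H\mid\Pi_f$ (Lemma \ref{lem43}); you also correctly locate both the source of the loss $k^{-2}$ and the role of the hypothesis that $p\delta\nmid\Pi_f$ for every prime $p\mid k$, which indeed forces the reduced denominator of $(k\ell+j)/(\delta k)$, and hence of the combined rational, not to divide $\Pi_f$. Two of your three cases are fine modulo a small algebra slip: since $2\sin^2(\pi x)=1-\cos(2\pi x)$, the exponential sum you need is $\sum_{n\le L}e(f(n)y)$ itself, not $\sum_{n\le L}e(2f(n)y)$, so the worry about transferring the Diophantine data to $2y$ is self-inflicted and disappears.

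The genuine gap is in the case $H\mid\Pi_f$, which is the only case where the answer is really of size $k^{-2}L$ rather than $\gg L$, i.e.\ the heart of the lemma. You pass from $\sum_{n\le L}\sin^2\left(\pi(\beta-f(n)y')\right)$ to $\int_0^L\sin^2\left(\pi(\beta-f(u)y')\right)du$ on the grounds that the integrand ``varies slowly on unit intervals.'' It does not vary slowly enough: $|y'|$ may be as large as about $1/(HL^{r-1})$, so the phase derivative $f'(u)y'$ is of order $1/H$ for $u\asymp L$, and the total variation of the integrand over $[0,L]$ is of order $|y'|f(L)\asymp L/H$. Since $H\le\Pi_f=O(1)$ in this case, the naive sum-versus-integral error is $O(L)$ --- the same order as $L$ itself and far larger than the main term $L/a^2\asymp L/k^2$ you are trying to extract. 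The paper avoids this by applying the van der Corput identity \eqref{eq51} (which you already invoke in the other major-arc case): splitting $n$ into residue classes modulo $H$ and using the first-derivative test produces the integral with error $O(H)=O(1)$, which is acceptable. Relatedly, the rational appearing in your integral should be $\beta=b/h-f(0)d/H$ (reduced modulo $1$), not $b/h$: the congruence $f(n)d/H\equiv f(0)d/H\pmod 1$ holds only at integers $n$ and must be absorbed before the sum is converted to an integral; the resulting $\beta$ is a non-integer whose denominator is at most $h\Pi_f\ll k$, which is exactly what Lemma \ref{lem43} needs to deliver the bound $\gg k^{-2}L$.
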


\begin{proof}
By the well-known Dirichlet's approximation theorem, for any real number $y$ and positive integer $L$ sufficiently large, there exist integers $d$ and $h$ with $0<h\le L^{r-1}$ and $\gcd(h, d)=1$ such that
\begin{equation}\label{eq5}
\left|y-\frac{d}{h}\right|<\frac{1}{hL^{r-1}}.
\end{equation}
We prove the lemma by considering the following two cases.

For any real number $y$ satisfy the approximation \eqref{eq5} with $h|\Pi_f$, using the definition of $\Pi_f$ we have
\begin{align}\label{eq400}
{\rm Re}\sum_{1\le n\le L}&\left[1-e\left(\frac{k\ell+j}{\delta k}-f(n)y\right)\right]\nonumber\\
&=L-\Re\left(e\left(\frac{k\ell+j}{\delta k}-\frac{df(0)}{h}\right)\sum_{1\le n\le L}e\left(\frac{d(f(0)-f(n))}{h}+f(n)\left(\frac{d}{h}-y\right)\right)\right)\nonumber\\
&=L-\Re\left(e\left(\frac{k\ell+j}{\delta k}-\frac{df(0)}{h}\right)\left(\int_{0}^Le\left(f(u)\vartheta\right)\,du+O(1)\right)\right)\nonumber\\
&=2\int_{0}^L\sin^2\left(\pi\left(\frac{k\ell+j}{\delta k}-\frac{df(0)}{h}-f(u)\vartheta\right)\right)\,du+O(1),
\end{align}
 by using \eqref{eq51}, where $\vartheta=y-d/h$. Since $1\le j<k$, there exist $a\in\zb_+,b\in\zb$ such that $\gcd(a,b)=1$ and
$$\frac{b}{a}:=\frac{k\ell+j}{\delta k}-\frac{df(0)}{h}\in \left(\frac{1}{k\Pi_f}\zb\right)\setminus\zb.$$
Thus Lemma \ref{lem43} implies that
\begin{align}\label{eq400}
{\rm Re}\sum_{1\le n\le L}&\left[1-e\left(\frac{k\ell+j}{\delta k}-f(n)y\right)\right]\gg a^{-2}L\gg k^{-2}L
\end{align}
holds for all $k,L$ such that $k^{-2}L\rrw \infty$.

For any real number $y$ satisfying the approximation \eqref{eq5} with $h\ge 2$ and $h\nmid \Pi_f$, using \eqref{eq51} and Lemma \ref{lem41} we have
\begin{equation}\label{eq411}
\left|\sum_{1\le n\le L}e\left(f(n)y\right)\right|\le \left|\frac{1}{h}\sum_{1\le j\le h}e\left(f(j)\frac{d}{h}\right)\right|L+O(h)\le L\left(1-\frac{c_1}{h^4}\right)+O\left(h\right),
\end{equation}
holds for some absolute constant $c_1>0$. Moreover, from Lemma \ref{lem42} there exists a constant $C_f\in\zb_+$ depending only on $f$ such that if $h>C_f$ then
\begin{equation}\label{eq412}
\left|\sum_{1\le n\le L}e(f(n) y)\right|\le L^{1-2^{-r-1}}+Lh^{-2^{-r-1}}.
\end{equation}
The use of \eqref{eq411} and \eqref{eq412} yields there exists a constant $c_f\in(0,1)$ depending only on $f$ such that
\begin{equation*}
\left|\sum_{1\le n\le L}e(f(n) y)\right|\le (1-c_f)L
\end{equation*}
holds for all sufficiently large $L$. Thus we obtain that,
\begin{align}\label{eq413}
{\rm Re}\sum_{1\le n\le L}\left[1-e\left(\frac{j}{k}-f(n)y\right)\right]\ge L-\left|\sum_{1\le n\le L}e\left(f(n)y\right)\right|\ge c_fL\gg L.
\end{align}

Combining \eqref{eq400} and \eqref{eq413}, the proof follows.
\end{proof}

We now give the proof of Proposition \ref{pro2}. Let $\delta\mid\Pi_f$ and $k\in\zb_+$ such that for any $p|k$ we have $p\delta\nmid \Pi_f$.  From Proposition \ref{main1} and for all $x>0$ we have
\begin{align*}
E_{f,k,\delta,a}(x):=&\sum_{\substack{n\ge 0\\ n\equiv af(0)~(\bmod \delta)}}\left|p_{f}(a,\delta k;n)-\frac{p_f( n)}{k}\right|^2e^{-2nx}\\
=&\int_{-1/2}^{1/2}\left|\sum_{\substack{n\ge 0\\ n\equiv af(0)~(\bmod \delta)}}\left(p_{f}(a,\delta k;n)-\frac{p_f( n)}{k}\right)e^{-nx}e(-ny)\right|^2\,dy\\
=&\int_{-1/2}^{1/2}\left|\frac{1}{k\delta}\sum_{1\le j<k, 0\le \ell<\delta}\zeta_{\delta k}^{-ja-k\ell af(0)}G_f\left(\zeta_{\delta k}^{j}, \zeta_{\delta }^{\ell} e^{-x-2\pi\ri y}\right)\right|^2\,dy\\
\le &G_f(x)^2\int_{-1/2}^{1/2}\max_{\substack{1\le j<k\\ 0\le \ell<\delta}}\exp\left(-F_{f,k,\delta, j,\ell}(x,y)\right)\,dy,
\end{align*}
where
\begin{align*}
F_{f,k,\delta, j,\ell}(x,y)&=-\log \left|\frac{G_f\left(\zeta_{\delta k}^{j}, \zeta_{\delta }^{\ell} e^{-x-2\pi\ri y}\right)}{G_f(x)}\right|^2\nonumber\\
&=-\log \left|\prod_{n\ge 1}\frac{1-e^{-f(n)x}}{1- e^{-f(n)x}e(\frac{j+k\ell}{\delta k}-f(n)y)}\right|^2\nonumber\\
&=2\sum_{n,c\ge 1}\frac{e^{-f(n)c x}}{c}{\rm Re}\left[1-e\left(c\left(\frac{j+k\ell}{\delta k}-f(n)y\right)\right)\right].
\end{align*}
Furthermore,
\begin{align*}
F_{f,k,\delta, j,\ell}(x,y)&\ge 2\sum_{n\ge 1}e^{-f(n) x}{\rm Re}\left[1-e\left(\frac{j+k\ell}{\delta k}-f(n)y\right)\right]\\
&\ge \frac{2}{e^{xf(x^{-1/r})}} {\rm Re}\sum_{1\le n\le x^{-1/r}}\left[1-e\left(\frac{j+k\ell}{\delta k}-f(n)y\right)\right]\\
&\gg {\rm Re}\sum_{1\le n\le x^{-1/r}}\left[1-e\left(\frac{j+k\ell}{\delta k}-f(n)y\right)\right],
\end{align*}
and the use of Lemma \ref{Z} implies that there exists a constant $\delta_f'\in\rb_+$ depending only on $f$ such that
\begin{align*}
\min_{\substack{-1/2\le y\le 1/2\\ 1\le j<k, 0\le \ell<\delta}}F_{f,k,\delta, j,\ell}(x,y)\ge \begin{cases}\qquad 0 & if~k^{-2}x^{-1/r}= O(1),\\
 2\delta_f' k^{-2}x^{-1/r} &if~k^{-2}x^{-1/r}\rrw +\infty.
 \end{cases}
\end{align*}
Therefore,
\begin{align*}
E_{f,k,\delta,a}(x)&\ll G_f(x)^2\exp\left(-\min_{\substack{-1/2\le y\le 1/2\\ 1\le j<k, 0\le \ell<\delta}}F_{f,k,\delta, j,\ell}(x,y)\right)\\
&\ll G_f(x)^2\exp\left(-2\delta_f'k^{-2}x^{-1/r}\right),
\end{align*}
as $x\rrw 0^+$. This completes the proof of Proposition \ref{pro2}.


\begin{thebibliography}{10}

\bibitem{MR0557013}
George~E. Andrews.
\newblock {\em The theory of partitions}.
\newblock Addison-Wesley Publishing Co., Reading, Mass.-London-Amsterdam, 1976.
\newblock Encyclopedia of Mathematics and its Applications, Vol. 2.

\bibitem{MR1575586}
G.~H. Hardy and S.~Ramanujan.
\newblock Asymptotic {F}ormulaae in {C}ombinatory {A}nalysis.
\newblock {\em Proc. London Math. Soc. (2)}, 17:75--115, 1918.

\bibitem{MR1555393}
E.~Maitland Wright.
\newblock Asymptotic partition formulae. {III}. {P}artitions into {$k$}-th
  powers.
\newblock {\em Acta Math.}, 63(1):143--191, 1934.

\bibitem{MR67913}
K.~F. Roth and G.~Szekeres.
\newblock Some asymptotic formulae in the theory of partitions.
\newblock {\em Quart. J. Math. Oxford Ser. (2)}, 5:241--259, 1954.

\bibitem{MR1575213}
Hans Rademacher.
\newblock On the {P}artition {F}unction $p(n)$.
\newblock {\em Proc. London Math. Soc. (2)}, 43(4):241--254, 1937.

\bibitem{MR5522}
A.~E. Ingham.
\newblock A {T}auberian theorem for partitions.
\newblock {\em Ann. of Math. (2)}, 42:1075--1090, 1941.

\bibitem{MR62781}
G\"{u}nter Meinardus.
\newblock Asymptotische {A}ussagen \"{u}ber {P}artitionen.
\newblock {\em Math. Z.}, 59:388--398, 1954.

\bibitem{MR0382210}
L.~B. Richmond.
\newblock Asymptotic relations for partitions.
\newblock {\em J. Number Theory}, 7(4):389--405, 1975.

\bibitem{BM}
K.~Bringmann and Mahlburg K.
\newblock Transformation laws and asymptotics for nonmodular products.
\newblock {\em unpublished preprint}.

\bibitem{Ciolan}
Alexandru Ciolan.
\newblock Asymptotics and inequalities for partitions into squares.
\newblock {\em Int. J. Number Theory}, 16(1):121--143, 2020.

\bibitem{Ciolan1}
Alexandru Ciolan.
\newblock Equidistribution and inequalities for partitions into powers.
\newblock {\em https://arxiv.org/abs/2002.05682}.

\bibitem{MR2061214}
Henryk Iwaniec and Emmanuel Kowalski.
\newblock {\em Analytic number theory}, volume~53 of {\em American Mathematical
  Society Colloquium Publications}.
\newblock American Mathematical Society, Providence, RI, 2004.

\end{thebibliography}

\paragraph{Acknowledgements.} The author would like to thank the referee for very helpful
and detailed comments and suggestions. This research was partly supported by the
National Science Foundation of China (Grant No. 11971173).

\bigskip
\noindent
{\sc Nian Hong Zhou\\
School of Mathematics and Statistics, Guangxi Normal University\\
No.1 Yanzhong Road, Yanshan District, Guilin, 541006\\
Guangxi, PR China}

\noindent and

\noindent
{\sc School of Mathematical Sciences, East China Normal University\\
500 Dongchuan Road, Minhang District, 200241\\
Shanghai, PR China}\newline
Email:~\href{mailto:nianhongzhou@outlook.com; nianhongzhou@gxnu.edu.cn}{\small nianhongzhou@outlook.com; nianhongzhou@gxnu.edu.cn}

\end{document}